\documentclass[11pt,leqno]{article}

\usepackage{amsfonts,amsmath,amssymb,amsthm}
\usepackage{fullpage}
\usepackage{dsfont}
\usepackage{hyperref}
\hypersetup{hidelinks}

\newtheorem{theorem}{Theorem}[section]
\newtheorem{lemma}[theorem]{Lemma}
\newtheorem{proposition}[theorem]{Proposition}
\newtheorem{corollary}[theorem]{Corollary}
\numberwithin{equation}{section}

\newcommand{\ls}{\leqslant}
\newcommand{\gr}{\geqslant}
\newcommand{\E}{\mathbb{E}}
\newcommand{\Pp}{\mathbb{P}}
\newcommand{\R}{\mathbb{R}}
\newcommand{\supp}{\operatorname{supp}}
\newcommand{\1}{\mathds{1}}
\newcommand{\norm}[1]{\left\lVert#1\right\rVert}

\usepackage{setspace}
\begin{document}
\small

\title{\bf Online balancing of vectors with small coordinates}
\author{Antonios Hmadi}
\date{}
\maketitle

\begin{abstract}\footnotesize
Let $v_1,\ldots,v_T\in B_2^m$ be fixed in advance and revealed sequentially, and assume that $\norm{v_t}_\infty\ls d^{-1/2}$ for some $d\gr1$ and every $1\ls t\ls T$.
There are absolute constants $L,C,c>0$ and a randomized online signing such that
$$ \Pp\left\{ \max_{k\ls T}\norm{\sum_{t=1}^k\varepsilon_t v_t}_\infty>6L \right\} \ls CT\exp\!\left[-\frac{cd}{\ln^2(ed)}\right]. $$
Consequently, constant prefix discrepancy holds with probability at least $1-\varepsilon$ once
$d$ is at least $C\ln\frac{3T}{\varepsilon}\left[\ln\left(e+\ln\frac{3T}{\varepsilon}\right)\right]^2$.
In particular, every fixed sequence of vectors $a_t\in[-1,1]^m$ with at most $d$ nonzero coordinates admits an online signing with prefix discrepancy $O(\sqrt d)$ and failure probability at most $CT\exp[-cd/\ln^2(ed)]$.
We also prove a nonuniform version in which the failure probability depends on the individual parameters $d_t=\norm{v_t}_\infty^{-2}$, and a lower bound showing that a universal constant prefix discrepancy is impossible when $d=o(\ln T)$.
We identify the corresponding $\ln^2 d$ barrier for the compact-potential method and extend the argument to general symmetric target bodies admitting a quadratic smoothness estimate.
\end{abstract}

%%%%%%%%%%%%%%%%%%%%%%%%%%%%%%%%%%%%%%%%%%%%%%%%%%%%%%%%%%%%%%%%%%%%%%%%%%%%%%%%%%%%%%%%%%%%%%%%%%%%%%%%%%%%%%%%%%%%%%%%%%%%%%%%%%%%%%
\section{Introduction}
%%%%%%%%%%%%%%%%%%%%%%%%%%%%%%%%%%%%%%%%%%%%%%%%%%%%%%%%%%%%%%%%%%%%%%%%%%%%%%%%%%%%%%%%%%%%%%%%%%%%%%%%%%%%%%%%%%%%%%%%%%%%%%%%%%%%%%

Vector balancing asks how efficiently a family of vectors can cancel after assigning signs to them.
In the offline problem all vectors are known in advance, whereas in the online problem they are revealed one by one and each sign must be chosen before the future arrivals are seen.
Let $v_1,\ldots,v_T\in\R^m$ be fixed vectors revealed sequentially and let $\varepsilon_t\in\{-1,1\}$ be chosen after observing $v_t$.
We study the prefix discrepancy
$$ \max_{1\ls k\ls T}\norm{\sum_{t=1}^k\varepsilon_t v_t}_\infty. $$

The offline Koml\'os problem asks for an absolute terminal discrepancy when $\norm{v_t}_2\ls1$.
The classical general estimate is $O(\sqrt{\ln(eT)})$, as follows from Banaszczyk's Gaussian-measure theorem~\cite{Banaszczyk}, and Bansal and Jiang obtained a $\widetilde O(\ln^{1/4}(eT))$ estimate~\cite{BansalJiang}.

For the online problem under the same Euclidean hypothesis, Alweiss, Liu and Sawhney obtained an $O(\ln(mT))$ prefix-discrepancy bound in linear time~\cite{ALS}.
Kulkarni, Reis and Rothvoss proved that every prefix can be made $O(1)$-sub-Gaussian, which gives the optimal $O(\sqrt{\ln(eT)})$ bound, and established a matching lower bound against an oblivious adversary~\cite{KRR}.
Aden-Ali gave an $O(mT)$-time algorithm with the same upper bound~\cite{AdenAli}.

For $v\neq0$ put $d(v)=\norm v_\infty^{-2}$ and set $d(0)=\infty$.
Thus $d(v)\gr d$ means $\norm v_\infty\ls d^{-1/2}$.
We call $v$ $d$-sparse if it has at most $d$ nonzero coordinates.
We ask whether the two conditions $\norm v_2\ls1$ and $d(v)\gr d$, without any restriction on the number of nonzero coordinates of $v$, permit constant prefix discrepancy.

Altschuler and Tikhomirov~\cite{AT} study the related online Beck--Fiala problem, where the arrivals $a_t\in[-1,1]^m$ are $d$-sparse.
For every fixed $\eta>0$, their main theorem gives prefix discrepancy $O_\eta(\sqrt d)$ with failure probability
$$ C_\eta T\exp\!\left[-\frac{c_\eta d}{\ln^{2+\eta}(ed)}\right], $$
and the sufficient range $d\gr C_\eta(\ln T)(\ln\ln T)^{2+\eta}$.
After normalization by $\sqrt d$, the vectors $v_t=a_t/\sqrt d$ satisfy $\norm{v_t}_2\ls1$ and $\norm{v_t}_\infty\ls d^{-1/2}. $

We use the compact Metropolis framework of~\cite[Section~2]{AT}: the transition, the product law, the notion of a balanced state and the curvature criterion.
For Theorem~\ref{thm:main-online} we also use the bump density and the one-dimensional estimates in~\cite[Lemmas~3.1--3.2]{AT}.
The three-way coupling producing genuine signs is from~\cite[Lemma~6]{AdenAli}.
The result below applies to the full small coordinate class $B_2^m\cap d^{-1/2}B_\infty^m$, with no restriction on the support of the arrivals. 
On the $d$-sparse subclass it also replaces the loss $\ln^{2+\eta}(ed)$ in~\cite{AT} by the critical scale $\ln^2(ed)$.

Our principal result is the following.

\begin{theorem}\label{thm:critical-uniform}
There are absolute constants $L,C,c>0$ and a randomized online algorithm, allowed to depend on the common parameter $d$, with the following property.
For every $m,T\gr1$, every $d\gr1$, and every fixed sequence $v_1,\ldots,v_T\in\R^m$ satisfying
$$ \norm{v_t}_2\ls1, \qquad \norm{v_t}_\infty\ls d^{-1/2} \qquad(1\ls t\ls T), $$
the algorithm produces signs $\varepsilon_t\in\{-1,1\}$ such that
\begin{equation}\label{eq:critical-uniform-failure}
\Pp\left\{ \max_{k\ls T}\norm{\sum_{t=1}^k\varepsilon_t v_t}_\infty>6L \right\} \ls C T \exp\!\left[-\frac{c d}{\ln^2(ed)}\right].
\end{equation}
Consequently, for every $0<\varepsilon<1$, the sufficient condition
\begin{equation}\label{eq:critical-uniform-threshold}
d\gr C\ln\frac{3T}{\varepsilon} \left[\ln\left(e+\ln\frac{3T}{\varepsilon}\right)\right]^2
\end{equation}
makes the probability in \eqref{eq:critical-uniform-failure} at most $\varepsilon$.
In particular, $d\gr C(\ln T)[\ln\ln(e^eT)]^2$ gives constant prefix discrepancy with probability at least $1-T^{-c}$.
\end{theorem}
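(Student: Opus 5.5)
We plan to run the compact Metropolis framework of \cite[Section~2]{AT} with a product-form stationary law on $\R^m$. Fix a one-dimensional bump density $\rho$ supported on $[-6L,6L]$ and put $\mu=\rho^{\otimes m}$. The algorithm keeps a prefix sum $x_{t-1}$, intended to be approximately $\mu$-distributed. On arrival of $v_t$ it proposes $x_{t-1}\pm v_t$ and chooses the sign by a Metropolis-type transition, so that $\mu$ is exactly preserved whenever the current state is \emph{balanced}. The signs themselves are made genuine $\pm1$ by the three-way coupling of \cite[Lemma~6]{AdenAli}.

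Since $\supp\mu\subset[-6L,6L]^m$, the prefix bound can fail only if some state is unbalanced. A union bound over $t\ls T$ then reduces \eqref{eq:critical-uniform-failure} to one estimate: for a single fixed $v$ with $\norm v_2\ls1$ and $\norm v_\infty\ls d^{-1/2}$, and $X\sim\mu$,
$$ \Pp\{X \text{ is not balanced for } v\}\ls C\exp\!\left[-\frac{cd}{\ln^2(ed)}\right]. $$
The factor $T$ in \eqref{eq:critical-uniform-failure} comes from this union bound. We should also check that leaving the stationary law at one step costs nothing beyond the failure event.

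The curvature criterion of \cite{AT} says balance holds if the second-order term $\sum_i v_i^2\psi''(X_i)$ is dominated by a first-order quantity. Here $\psi=-\ln\rho$. Expanding $\ln\rho(X_i+v_i)-\ln\rho(X_i)$ to second order produces a sum $\sum_i f_i(X_i)$ of independent terms. The one-dimensional estimates \cite[Lemmas~3.1--3.2]{AT} give their means, variances and tails. The bump has $\psi''(s)$ blowing up near the boundary, roughly like $\mathrm{dist}(s,\partial)^{-2}$ up to logarithms. The key step is a truncation at scale $h=d^{-1/2}\ln(ed)$. On the event $\mathrm{dist}(X_i,\partial)\gr h$ the Taylor expansion is controlled, and each summand is bounded by $v_i^2\,\psi''$ evaluated at distance $h$, which is at most $d^{-1}\ln^2(ed)\cdot$const relative to its variance proxy. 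Because $\sum_i v_i^2\ls1$ and each $v_i^2\ls d^{-1}$, Bernstein's inequality then gives a deviation probability $\exp[-c\,d/\ln^2(ed)]$. This is the source of the critical $\ln^2(ed)$ rather than $\ln^{2+\eta}$.

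The coordinates lying within $h$ of the boundary need separate treatment. Their $\mu$-mass is tiny because $\rho$ decays like $\exp(-1/\mathrm{dist})$ near the edge. Their total weighted contribution $\sum_i v_i^2\1\{\mathrm{dist}(X_i,\partial)<h\}$ is a sum of independent bounded variables with tiny mean, so a Chernoff bound again gives the $\exp[-cd/\ln^2(ed)]$ tail.

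The main obstacle is the choice of the bump profile. It must be tuned so that the truncated second moment and the sup bound at scale $h$ balance with exactly $\ln^2(ed)$ loss, uniformly, without any support restriction on $v$. Our first attempt will be $\rho(s)\propto\exp(-1/(1-s^2/36L^2))$, with the one-dimensional lemmas of \cite{AT} applied verbatim.

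For the threshold \eqref{eq:critical-uniform-threshold}, put $u=\ln(3T/\varepsilon)$ and take $d=Cu\ln^2(e+u)$. Then $\ln(ed)\ls C'\ln(e+u)$, so $cd/\ln^2(ed)\gr (cC/C'^2)\,u\gr u+\ln C$ for $C$ large. This makes the right side of \eqref{eq:critical-uniform-failure} at most $\varepsilon$, and taking $\varepsilon=T^{-c}$ gives the final statement.
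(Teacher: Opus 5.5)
Your outer reduction matches the paper's. It uses the product law, lazy Metropolis walks, the three-way coupling of \cite{AdenAli}, and a union bound over $t$ reducing \eqref{eq:critical-uniform-failure} to a one-step estimate. Your final threshold arithmetic is also fine; for $d\gr d_0$ you should add that $s\mapsto s/\ln^2(es)$ is eventually increasing. Two small corrections. The Metropolis kernel preserves $\mu$ exactly whether or not the state is balanced; balance is needed only for the coupling. And the state is three auxiliary walks with $\sum_{t\ls k}\varepsilon_tv_t=\sum_{j=1}^3(X_{k,j}-X_{0,j})$, not a single prefix sum.

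\textbf{The gap is the one-step estimate,} which is the whole content of the $\ln^2(ed)$ rate.
\begin{itemize}
\item \emph{Your bump fails.} For $\rho\propto\exp(-1/(1-s^2))$ one has $\psi''\asymp\operatorname{dist}^{-3}\asymp\psi^3$ near the edge, not $\operatorname{dist}^{-2}$. (A potential with $\psi''\approx\operatorname{dist}^{-2}$ up to logarithms would have only quasi-polynomially small boundary mass, contradicting your boundary step.) Take $v=d^{-1/2}(1,\ldots,1,0,\ldots)$ with $d$ nonzero entries. A single coordinate at distance $\lesssim d^{-1/3}$ from the edge already forces $D_v(X)>2\ln2$, and this event has probability at least $\exp(-Cd^{1/3})$. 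So the route through Lemma~\ref{lem:curvature-criterion} with this bump cannot give $\exp[-cd/\ln^2(ed)]$.
\item \emph{The Bernstein step does not close.} Summands bounded by $v_i^2h^{-2}\ls 1/\ln^2(ed)$ give a tail of only $\exp(-c\ln^2(ed))$. To reach $\exp[-cd/\ln^2(ed)]$ the summands would have to be $O(\ln^2(ed)/d)$, i.e.\ $\psi''=O(\ln^2(ed))$ off the exceptional set.
\item \emph{The boundary step bounds the wrong quantity.} It controls the weighted count $\sum_iv_i^2\1\{\operatorname{dist}(X_i,\partial)<h\}$. That count says nothing about those coordinates' contribution to $D_v(X)$, which is $+\infty$ if $X_i\pm v_i$ leaves the support.
\item \emph{No fixed bump can work by this method.} Using the bump of \cite{AT} verbatim gives only $\ln^{2+2/\beta}$; this is Theorem~\ref{thm:main-online}. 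Proposition~\ref{prop:compact-barrier} shows that no $d$-independent compact potential satisfies $\varphi''\ls C\varphi\ln^2\varphi$.
\end{itemize}

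\textbf{The missing idea} is the one the statement signals by letting the algorithm depend on $d$. Choose a potential depending on $d$ with the critical curvature $\varphi_d''\lesssim\ln^2(ed)\,\varphi_d$ only while $\varphi_d\ls d$, and faster growth above that level to keep the support compact. The paper defines $\varphi_d$ by $\int_1^{\varphi_d(y)}du/(u\sqrt{H_d(u)})=a_dy^2$. Here $H_d\asymp\ln^2(ed)$ on $[\ln(ed),d]$ and $H_d\asymp\ln^2(ed)\,(\ln(eu)/\ln(ed))^3$ beyond $d$, so that $\int_1^\infty du/(u\sqrt{H_d})\asymp1$.

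The envelope $W_d=1+\varphi_dH_d(\varphi_d)$ then has three properties:
\begin{itemize}
\item it bounds the second difference at step $h$ whenever $h^2W_d\ls c_0$;
\item the same condition keeps $y\pm h$ inside $(-1,1)$, so the boundary needs no separate treatment;
\item it satisfies $\Pp\{W_d>x\}\ls Ce^{-cx/\ln^2(ed)}$ for $\ln^3(ed)\ls x\ls d$, and $\E[W_d\1\{W_d>d\}]\ls Ce^{-cd/\ln^2(ed)}$.
\end{itemize}
With $w_i=v_i^2$, an exponential moment at $\lambda=\kappa d/\ln^2(ed)$ controls $\sum_iw_i\min\{W_i,d\}$. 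Markov's inequality controls $\sum_iw_iW_i\1\{W_i>d\}$. Neither step needs a union bound over the unboundedly many coordinates (Lemma~\ref{lem:critical-weighted-concentration}). Your proposal names the obstacle, the choice of profile, but supplies neither this construction nor a substitute for it.
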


The algorithm in Theorem~\ref{thm:critical-uniform} uses exact sampling from the one-dimensional law constructed below and exact evaluation of its potential in the Metropolis ratios.
We do not claim a finite-bit running-time bound for approximate sampling and evaluation.
The obstruction $d=o(\ln T)$ in Proposition~\ref{prop:flattened-lower} is obtained by tensorizing the two-dimensional lower bound of Kulkarni, Reis and Rothvoss~\cite{KRR}.

The next statement retains the individual values $d_t=d(v_t)$.

\begin{theorem}\label{thm:main-online}
For every $\beta>0$ there are constants $L_\beta,C_\beta,c_\beta>0$ and a randomized online algorithm with the following property.
For every $m,T\gr1$ and every fixed sequence $v_1,\ldots,v_T\in\R^m$ satisfying $\norm{v_t}_2\ls1$, set $d_t=d(v_t)$.
Then the algorithm produces signs $\varepsilon_t\in\{-1,1\}$ such that
$$ \Pp\left\{ \max_{k\ls T}\norm{\sum_{t=1}^k\varepsilon_t v_t}_\infty>6L_\beta \right\} \ls C_\beta\sum_{t=1}^T \exp\!\left[-\frac{c_\beta d_t} {\ln^{2+2/\beta}(e d_t)}\right], $$
where the summand corresponding to $d_t=\infty$ is interpreted as zero.
\end{theorem}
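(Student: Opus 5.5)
We follow the compact Metropolis framework of \cite[Section~2]{AT}, adapted so that the target law sees each arrival only through its own scale $d_t$. Fix $\beta>0$. We take the one-dimensional bump density $\mu_\beta$ of \cite[Lemmas~3.1--3.2]{AT}, supported in $[-L_\beta,L_\beta]$, with a smooth log-density $-\psi_\beta$ that is flat in the bulk and steep only in boundary layers. The product law $\mu_\beta^{\otimes m}$ on $[-L_\beta,L_\beta]^m$ is the stationary state we want the prefix sums $w_k=\sum_{t\ls k}\varepsilon_t v_t$ to track.

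At time $t$ we run a Metropolis step. We propose $w_{t-1}\pm v_t$ with a fair sign and accept with probability $\min\{1,e^{-\Psi(w_{t-1}\pm v_t)+\Psi(w_{t-1})}\}$, where $\Psi=\sum_i\psi_\beta(x_i)$. Rejections are then converted into genuine signs by the three-way coupling of \cite[Lemma~6]{AdenAli}. As a result, the output is a true signing whose prefix sums coincide with the Metropolis chain except on an event of small probability. Because Metropolis preserves $\mu_\beta^{\otimes m}$ exactly, the chain, started from a sample of the product law, has marginal $\mu_\beta^{\otimes m}$ at every time. In particular it always stays in $[-L_\beta,L_\beta]^m$, so the prefix discrepancy of the chain is at most $L_\beta$.

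The task therefore reduces to bounding the probability that the coupling fails at step $t$, which is the probability that the state is \emph{unbalanced} for $v_t$. By the curvature criterion of \cite[Section~2]{AT}, failure at step $t$ is controlled by the event that the first-order term $\langle\nabla\Psi(w),v_t\rangle$ is large relative to the second-order term $\sum_i\psi_\beta''(w_i)v_{t,i}^2$. Under the product law this is a deviation bound for a sum of independent variables $\psi_\beta'(X_i)v_{t,i}$ with weights $|v_{t,i}|\ls d_t^{-1/2}$ and $\sum v_{t,i}^2\ls1$. This step is where $d_t$ enters, and it is the main obstacle.

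We would prove a Bernstein-type bound. By the estimates of \cite[Lemmas~3.1--3.2]{AT}, the bump is built so that $\psi'_\beta$ has sub-exponential tails with a logarithmic loss: its moments grow like $\E|\psi_\beta'(X)|^p\ls (Cp\ln^{1+1/\beta}p)^p$. Combined with $\norm{v_t}_\infty\ls d_t^{-1/2}$, this yields
$$\Pp\{\text{step $t$ unbalanced}\}\ls C_\beta\exp\!\left[-\frac{c_\beta d_t}{\ln^{2+2/\beta}(ed_t)}\right].$$
The mechanism is to optimize the moment order at $p\approx d_t/\ln^{2+2/\beta}$. Convexity enters through the second-order term: the curvature is large precisely in the boundary layers where $\psi'$ is large, which absorbs the large gradients. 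Coordinates where $v_{t,i}=0$ are untouched, and a step with $d_t=\infty$ (that is, $v_t=0$) never fails.

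A union bound over $t$ then bounds the total coupling failure probability by the stated sum. On the complementary event the signed prefix sums equal the chain, and the chain satisfies $\norm{w_k}_\infty\ls L_\beta$ for all $k$. If the chain's start must be recentred at $0$, an extra additive $L_\beta$ appears from the initial offset; this is absorbed in the stated margin $6L_\beta$. Together these give the theorem.
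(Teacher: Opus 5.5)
There is a genuine gap in the central step: your reduction of ``unbalanced'' to a gradient event. The curvature criterion, Lemma~\ref{lem:curvature-criterion}, involves only the central second difference $D_v(x)=U(x+v)+U(x-v)-2U(x)$. The first-order term cancels exactly in $\Delta_++\Delta_-$. A large $|\langle\nabla U(x),v\rangle|$ only helps, since then one of $\Delta_\pm$ is nonpositive and that move is accepted with probability $1/3$. Failure requires large curvature, so your criterion is backwards. Moreover, the sum you propose to control cannot be controlled at the required level. With $X=L(Y_1,\ldots,Y_m)$ one has $\langle\nabla U(X),v\rangle=L^{-1}\sum_i\varphi'(Y_i)v_i$. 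This is centered (as $\varphi'$ is odd) with variance $L^{-2}\norm{v}_2^2\,\E\varphi'(Y)^2$. When $\norm{v}_2=1$ and $\norm{v}_\infty\ls d^{-1/2}$ it is asymptotically Gaussian by Lindeberg's theorem, so it exceeds a fixed multiple of its standard deviation with probability bounded below independently of $d$. No Bernstein or moment bound can give $\exp[-cd/\ln^{2+2/\beta}(ed)]$ for it. Consistently, first-derivative moments $(Cp\ln^{1+1/\beta}p)^p$ would produce the exponent $1+1/\beta$, not $2+2/\beta$. The sentence about convexity absorbing large gradients does not supply a mechanism.

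What is needed (Theorem~\ref{thm:general-balance}) is concentration of the curvature, with each coordinate at its own scale $r_i=v_i^{-2}$, so that $r_i\gr d(v)$ and $\sum_ir_i^{-1}=\norm{v}_2^2\ls1$. This replaces any count of coordinates, since $|\supp v|$ is unrestricted.
\begin{itemize}
\item First discard $\{Y_i\in B_{r_i}\hbox{ for some }i\}$ by the weighted union bound of Lemma~\ref{lem:aggregation} and the boundary tail in Lemma~\ref{lem:bump-input}. Your proposal never ensures that $X\pm v$ stays in the support.
\item Off this event, $D_v(X)\ls L^{-2}\sum_ir_i^{-1}G_{r_i}(Y_i)$. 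This is a nonnegative sum whose summands have tails $\exp[-cx/\ln^{2+2/\beta}(ex)]$ only for $1\ls x\ls r_i$; the exponent comes from $\Phi_\beta''\ls Ce^{A}A^{2+2/\beta}$.
\item The truncated Chernoff argument of Theorem~\ref{thm:abstract-concentration}, with $\lambda=\kappa d/\ell_q(d)$ and $\theta_i=\lambda/r_i$, bounds this sum by a constant $K$ outside an event of probability $C\exp[-cd/\ell_q(d)]$. One then takes $L^2\gr K/(2\ln2)$.
\end{itemize}

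Your coupling description also needs correcting. A single lazy chain holds with probability at least $1/3$ per step, so signed prefix sums cannot coincide with it. Aden-Ali's coupling runs three $\tfrac13$-lazy chains whose increments sum to $\pm1$. Hence $\sum_{t\ls k}\varepsilon_tv_t=\sum_{j=1}^3(X_{k,j}-X_{0,j})$, and the margin $6L_\beta$ is $3\cdot2L_\beta$, as in Proposition~\ref{prop:nonuniform-triplet}, not an initial offset. Finally, a fair-sign proposal with acceptance $\frac12\min\{1,\cdot\}$ does not satisfy the hypotheses of Lemma~\ref{lem:three-way-coupling}.
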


The sparse discrepancy problem goes back to Beck and Fiala~\cite{BeckFiala}.
For constructive forms of Banaszczyk's theorem and related discrepancy walks we refer to \cite{BansalDadushGarg,GramSchmidtWalk,LovettMeka,LSS,SmirnovVershynin}; see also the survey \cite{BansalSurvey}.
Online vector balancing under stochastic arrivals and sparse random-input models is considered in \cite{BJSS,BJMSS,BansalSpencer,ATThreshold}.
These input models are different from the fixed deterministic sequence considered here.

Although~\cite[Proposition~3.5]{AT} is stated under the normalized small coordinate hypotheses above, its curvature estimate already uses the weighted relations $v_i^2\ls d^{-1}$ and $\sum_i v_i^2\ls1$, whereas the treatment of the boundary event uses that the arrival has at most $d$ nonzero coordinates.
For general vectors we instead put $r_i(v)=v_i^{-2}$ and use the identity
$$ \sum_{i:v_i\neq0}\frac1{r_i(v)}=\norm v_2^2\ls1. $$
Together with the unequal scale concentration estimate in Section~\ref{sec:concentration} this proves Theorem~\ref{thm:main-online} without any restriction on the number of nonzero coordinates.
For Theorem~\ref{thm:critical-uniform} we instead choose the invariant law depending on the common parameter $d$. 
The point is that the potential need only have the critical curvature behavior up to the finite scale $d$; beyond this scale it may grow faster in order to remain compactly supported. 
This finite scale construction leads to the loss $\ln^2(ed)$.

The two results on compact one-dimensional potentials in Section~\ref{sec:offline-consequence} show that this is the critical scale for the pointwise curvature method used here. 
A fixed compact potential requires a logarithmic exponent strictly larger than $2$, whereas for a potential depending on $d$ the corresponding finite-scale curvature parameter is necessarily of order at least $\ln^2 d$.
Thus the construction of Section~\ref{sec:critical-bump} attains this finite-scale barrier up to absolute constants.

Finally, we give a separate geometric extension to general symmetric
target bodies.
General target norms have been considered for stochastic online arrivals in~\cite{BJMSS}, while Smirnov and Vershynin~\cite{SmirnovVershynin} use a Metropolis walk in a general convex body with the possibility of discarding steps.
Our setting is different, the arrivals are fixed in advance, every arrival receives a sign and both the size of the arrivals and the discrepancy are measured in the norm of $K$.
If $K$ is uniformly equivalent to the unit ball of a norm satisfying a quadratic smoothness estimate with parameter $\sigma$, Theorem~\ref{thm:smooth-body} gives constant $K$-prefix discrepancy under the condition
$$\norm{v_t}_K\ls d^{-1/2}, \qquad d\gr C A^2\sigma\left(m+\ln\frac{3T}{\varepsilon}\right).$$
To the best of our knowledge, this fixed-sequence formulation in the intrinsic norm of the target body does not appear explicitly in the literature.

The paper is organized as follows.
Section~\ref{sec:preliminaries} collects the notation, terminology and auxiliary results used throughout the paper.
Section~\ref{sec:critical-bump} constructs the invariant law used in Theorem~\ref{thm:critical-uniform} and proves the theorem.
Section~\ref{sec:concentration} proves the concentration estimate used in Theorem~\ref{thm:main-online}.
Section~\ref{sec:compact} proves the estimate for a single vector and Theorem~\ref{thm:main-online}.
Section~\ref{sec:offline-consequence} contains the lower bound and the two results on compact one-dimensional potentials.
Section~\ref{app:geometric} proves the geometric extension for general target bodies satisfying a quadratic smoothness condition.

%%%%%%%%%%%%%%%%%%%%%%%%%%%%%%%%%%%%%%%%%%%%%%%%%%%%%%%%%%%%%%%%%%%%%%%%%%%%%%%%%%%%%%%%%%%%%%%%%%%%%%%%%%%%%%%%%%%%%%%%%%%%%%%%%%%%%%
\section{Notation and auxiliary results}\label{sec:preliminaries}
%%%%%%%%%%%%%%%%%%%%%%%%%%%%%%%%%%%%%%%%%%%%%%%%%%%%%%%%%%%%%%%%%%%%%%%%%%%%%%%%%%%%%%%%%%%%%%%%%%%%%%%%%%%%%%%%%%%%%%%%%%%%%%%%%%%%%%

For $m\gr1$, we write $B_2^m:=\{x\in\R^m:\norm{x}_2\ls1\}$ and $B_\infty^m:=\{x\in\R^m:\norm{x}_\infty\ls1\}$.
For $x=(x_1,\ldots,x_m)\in\R^m$, we write $\norm{x}_2=\left(\sum_{i=1}^m x_i^2\right)^{1/2}$, $\norm{x}_\infty=\max_{1\ls i\ls m}|x_i|$, and $\supp x=\{i:x_i\neq0\}$.
We write $\1_E$ for the indicator of an event or set $E$, and $\E$, $\Pp$ for expectation and probability.
All logarithms are natural.
The letters $c,C,c_1,C_1,\ldots$ denote positive absolute constants whose values may change from line to line; a subscript, as in $C_\beta$, indicates allowed dependence on that parameter.
We write $A\asymp B$ when both $A\ls CB$ and $B\ls CA$ hold, and $\propto$ for equality up to a positive multiplicative constant.

An \emph{online signing} of fixed vectors $v_1,\ldots,v_T$ chooses $\varepsilon_t\in\{-1,1\}$ after seeing $v_t$ and before seeing $v_{t+1},\ldots,v_T$.
The sequence is chosen by an \emph{oblivious adversary} when it is fixed before the private randomness of the algorithm is sampled.
The quantity $\max_{1\ls k\ls T}\norm{\sum_{t=1}^k\varepsilon_t v_t}_\infty$ is the \emph{prefix discrepancy}; the same norm with $k=T$ only is the terminal discrepancy.
For $v\neq0$ put $d(v):=\norm{v}_\infty^{-2}$, and set $d(0):=\infty$.
Thus $d(v)\gr d$ is exactly the coordinatewise bound $\norm v_\infty\ls d^{-1/2}$.
For a sequence $(v_t)$ we write $d_t=d(v_t)$.
This condition imposes no restriction on the number of nonzero coordinates of $v$.

For $i\in\supp v$ put
\begin{equation}\label{eq:coordinate-scales}
r_i(v):=v_i^{-2}.
\end{equation}
Then
\begin{equation}\label{eq:coordinate-scale-relations}
r_i(v)\gr d(v), \qquad \sum_{i\in\supp v}\frac1{r_i(v)}=\norm v_2^2.
\end{equation}
When the vector $v$ is clear from the context we simply write $r_i$.

For $q>0$ and $x\gr1$ put
\begin{equation}\label{eq:ellq}
\ell_q(x):=\ln^q(ex).
\end{equation}

We shall use the following notation for one-dimensional compact densities.
Let $\nu$ be a probability measure on $(-1,1)$ with positive density
\begin{equation}\label{eq:generic-density}
\rho(y)=Z^{-1}e^{-\varphi(y)}\1_{\{|y|<1\}}, \qquad \varphi\in C^2((-1,1)),
\end{equation}
where $Z$ is the normalizing constant, and let $Y$ have law $\nu$.
We refer to $\varphi$ as the potential of the density.
For $b\gr1$ and $r\gr1$ put
\begin{equation}\label{eq:general-boundary}
B_r:=\{y\in(-1,1):1-|y|\ls b r^{-1/2}\},
\end{equation}
and, for $y\notin B_r$,
\begin{equation}\label{eq:general-envelope}
G_r(y):=1+\sup_{|u-y|\ls r^{-1/2}}(\varphi''(u))_+,
\end{equation}
with $G_r(y)=0$ on $B_r$.

We next recall the terminology of Altschuler and Tikhomirov~\cite[Section~2.1 and Definitions~2.3--2.4]{AT}.
The Metropolis transition used below depends only on the target density.
Let $K$ be a symmetric convex body and let $f$ be a positive density on $\operatorname{int}K$, extended by zero outside $K$.
For $x\in\operatorname{int}K$ and $v\in\R^m$ define
\begin{align}
 p_v^+(x)&=\frac13\min\left\{1,\frac{f(x+v)}{f(x)}\right\}, \label{eq:metropolis-plus}\\
 p_v^-(x)&=\frac13\min\left\{1,\frac{f(x-v)}{f(x)}\right\},\\
 p_v^0(x)&=1-p_v^+(x)-p_v^-(x).
 \label{eq:metropolis-zero}
\end{align}
The factors $\min\{1,f(x\pm v)/f(x)\}$ are the usual Metropolis acceptance factors: they make the transition reversible with respect to $f$ through detailed balance.
The step is called \emph{lazy} because it has a holding probability $p_v^0(x)$; the factor $1/3$ gives $p_v^\pm(x)\ls1/3$ and hence $p_v^0(x)\gr1/3$.
Following~\cite{AT}, $x$ is called \emph{$v$-balanced} when $p_v^+(x)+p_v^-(x)\gr\frac13$.
The word ``balanced'' is imported from~\cite{AT}; this condition is exactly what is needed in Lemma~\ref{lem:three-way-coupling} to couple three increments taking values in $\{-1,0,1\}$ so that their sum is always $\pm1$.
If, on $\operatorname{int}K$, the density has the form $f\propto e^{-U}$, then for $x,x\pm v\in\operatorname{int}K$ we write
\begin{equation}\label{eq:central-difference}
D_v(x):=U(x+v)+U(x-v)-2U(x)
\end{equation}
for the central second difference of $U$ in the direction $v$.
The next lemma is \cite[Lemma~2.2]{AT}; we include the proof for completeness.

\begin{lemma}\label{lem:metropolis-stationarity}
Let $X\sim\mu$, where $\mu$ has density $f$, and conditionally on $X=x$ let $X'=x+\delta v$, with $\delta\in\{-1,0,1\}$ having the probabilities \eqref{eq:metropolis-plus}--\eqref{eq:metropolis-zero}.
Then $X'\sim\mu$.
Consequently, for every deterministic sequence $(v_t)$, the inhomogeneous walk obtained by applying the $v_t$-transition at time $t$ has marginal law $\mu$ at every time, provided it starts from $\mu$.
\end{lemma}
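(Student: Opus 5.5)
The plan is to verify stationarity through detailed balance, i.e.\ to show that the one-step kernel is reversible with respect to $\mu$, and then pass to the inhomogeneous walk by induction on time. Fix $v$ and write the kernel as $P(x,\cdot)=p_v^+(x)\delta_{x+v}+p_v^-(x)\delta_{x-v}+p_v^0(x)\delta_x$. For any bounded measurable test function $g$ we will compute $\E g(X')$ and show that it equals $\E g(X)$. Substituting the kernel and using $p_v^0=1-p_v^+-p_v^-$ gives
$$\E g(X')=\int g f\,dx+\int\bigl(g(x+v)-g(x)\bigr)p_v^+(x)f(x)\,dx+\int\bigl(g(x-v)-g(x)\bigr)p_v^-(x)f(x)\,dx .$$
It therefore suffices to show that the last two integrals cancel.

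The key identity is the symmetry
$$f(x)\,p_v^+(x)=\tfrac13\min\{f(x),f(x+v)\}=f(x+v)\,p_v^-(x+v),$$
which holds whenever $x\in\operatorname{int}K$. We also need the boundary conventions. If $x+v\notin\operatorname{int}K$, then $f(x+v)=0$, so $p_v^+(x)=0$ and both sides vanish; the kernel never leaves $\operatorname{int}K$, so the density $f$ is defined wherever it is used. With this identity in hand, change variables $y=x+v$ in the integral of $g(x+v)p_v^+(x)f(x)$. It becomes $\int g(y)p_v^-(y)f(y)\,dy$, which cancels the term $-\int g(x)p_v^-(x)f(x)\,dx$. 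Symmetrically, substituting $y=x-v$ in the integral of $g(x-v)p_v^-(x)f(x)$ gives $\int g(y)p_v^+(y)f(y)\,dy$, which cancels $-\int g\,p_v^+f$. Hence $\E g(X')=\E g(X)$, and so $X'\sim\mu$.

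For the second assertion, argue by induction on $t$. Suppose $X_{t-1}\sim\mu$. The transition at time $t$ uses the deterministic vector $v_t$ and fresh randomness, so conditionally on $X_{t-1}=x$ it has exactly the law described in the lemma. The first part then gives $X_t\sim\mu$.

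The only delicate point is the boundary bookkeeping. One must make sure that the substitution $y=x\pm v$ stays inside the support: the integrands vanish whenever $f$ vanishes at either endpoint, because $\min\{f(x),f(x\pm v)\}=0$ there. Once this is checked the argument is routine.
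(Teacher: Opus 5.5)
Your proof is correct and follows the paper's argument. Both rest on the detailed-balance identity $f(x)p_v^+(x)=\frac13\min\{f(x),f(x+v)\}=f(x+v)p_v^-(x+v)$ and translation invariance of Lebesgue measure, followed by induction on $t$. The paper writes the one-step density pointwise as $f(y)p_v^0(y)+f(y-v)p_v^+(y-v)+f(y+v)p_v^-(y+v)=f(y)$, whereas you integrate against a test function and spell out the boundary conventions; this is the same calculation in dual form.
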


\begin{proof}
The two directed moves across every edge satisfy detailed balance:
$$ f(x)p_v^+(x)=\frac13\min\{f(x),f(x+v)\}=f(x+v)p_v^-(x+v). $$
Indeed, writing $f'$ for the density after one step and interpreting all terms outside $K$ as zero, detailed balance gives
$$ f'(y) =f(y)p_v^0(y)+f(y-v)p_v^+(y-v)+f(y+v)p_v^-(y+v) =f(y)\bigl(p_v^0(y)+p_v^-(y)+p_v^+(y)\bigr)=f(y). $$
The assertion for a deterministic sequence follows by induction.
\end{proof}

The following criterion is \cite[Lemma~2.5]{AT}; again we include the short proof.

\begin{lemma}\label{lem:curvature-criterion}
Let $f\propto e^{-U}$ on the interior of its convex support.
If $x$ and $x\pm v$ belong to this interior and the quantity $D_v(x)$ from \eqref{eq:central-difference} satisfies $D_v(x)\ls2\ln2$, then $x$ is $v$-balanced.
\end{lemma}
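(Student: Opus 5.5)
We need to show that $p_v^+(x)+p_v^-(x)\gr\frac13$ whenever $D_v(x)\ls2\ln2$, and the plan is a direct computation. Since $x$ and $x\pm v$ lie in the interior of the support, the two Metropolis ratios can be written as
$$\frac{f(x+v)}{f(x)}=e^{-a},\qquad \frac{f(x-v)}{f(x)}=e^{-b},$$
with $a=U(x+v)-U(x)$ and $b=U(x-v)-U(x)$. Then $a+b=D_v(x)\ls2\ln2$, and by \eqref{eq:metropolis-plus}--\eqref{eq:metropolis-zero} the claim is equivalent to
$$\min\{1,e^{-a}\}+\min\{1,e^{-b}\}\gr1.$$

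The argument splits into cases according to the signs of $a$ and $b$.

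First, suppose $a\ls0$ or $b\ls0$. Then one of the two minima equals $1$, the other term is positive, and the sum is at least $1$.

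Otherwise $a,b>0$, so the sum equals $e^{-a}+e^{-b}$. Since $t\mapsto e^{-t}$ is convex, Jensen's inequality (AM--GM) gives
$$e^{-a}+e^{-b}\gr 2e^{-(a+b)/2}\gr 2e^{-\ln 2}=1.$$
Multiplying by $\frac13$ yields $p_v^+(x)+p_v^-(x)\gr\frac13$, which is exactly the statement that $x$ is $v$-balanced.

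No step is a genuine obstacle. The only points needing care are that the interior hypothesis makes both ratios positive and finite, so that $a$ and $b$ are well defined, and that the convexity bound is applied only in the case where both minima are attained by the exponentials.
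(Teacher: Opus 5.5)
Your proof is correct and follows essentially the same route as the paper. You write the two Metropolis ratios as $e^{-\Delta_\pm}$ and dispose of the case where one increment is nonpositive, since the corresponding acceptance probability is then already $1/3$. Otherwise you apply AM--GM to get $e^{-\Delta_+}+e^{-\Delta_-}\gr 2e^{-D_v(x)/2}\gr 1$.
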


\begin{proof}
Put $\Delta_\pm:=U(x\pm v)-U(x)$.
If one of $\Delta_+,\Delta_-$ is nonpositive, then the corresponding transition probability equals $1/3$.
Otherwise both are positive and
$$ p_v^+(x)+p_v^-(x) =\frac13\bigl(e^{-\Delta_+}+e^{-\Delta_-}\bigr) \gr\frac23\exp\left(-\frac{\Delta_++\Delta_-}{2}\right) \gr\frac13, $$
because $\Delta_++\Delta_-=D_v(x)\ls2\ln2$.
\end{proof}

We shall use Lemma~6 of Aden-Ali~\cite{AdenAli}.
It couples three random variables taking values in $\{-1,0,1\}$ so that their sum is always a genuine sign $\pm1$.

\begin{lemma}\label{lem:three-way-coupling}
For $j\in\{1,2,3\}$, let $a_j,b_j\in[0,1/3]$ satisfy $a_j+b_j\gr1/3$.
There is a coupling $(\delta_1,\delta_2,\delta_3)$ of random variables taking values in $\{-1,0,1\}$ such that
$$ \Pp\{\delta_j=1\}=a_j, \qquad \Pp\{\delta_j=-1\}=b_j, \qquad \Pp\{\delta_j=0\}=1-a_j-b_j $$
for every $j$, and
\begin{equation}\label{eq:three-way-sign}
\delta_1+\delta_2+\delta_3\in\{-1,1\} \qquad\hbox{almost surely}.
\end{equation}
\end{lemma}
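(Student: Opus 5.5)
We prove Lemma~\ref{lem:three-way-coupling} by reducing to a coupling of three indicator-type events on a common uniform variable. Write $c_j:=1-a_j-b_j$ for the holding probability, so $0\ls c_j\ls 2/3$. The hypothesis $a_j,b_j\ls1/3$ gives $c_j\gr1/3$, and $a_j+b_j\gr1/3$ gives $c_j\ls2/3$. The requirement $\delta_1+\delta_2+\delta_3\in\{-1,1\}$ is met whenever the number of nonzero $\delta_j$ is odd (one or three) and, when all three are nonzero, they are not all equal in a way that produces $\pm3$.

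The cleanest sufficient arrangement is the following: almost surely exactly one or exactly three of the $\delta_j$ are nonzero, and when all three are nonzero they contain both signs. The conclusion then holds automatically, since a sum of three signs that are not all equal is $\pm1$, and a single nonzero term is $\pm1$.

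The plan has three steps.

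\emph{Step 1: the zero pattern.} Take $U$ uniform on $[0,1)$ and choose three arcs $Z_j\subset[0,1)$ of length $c_j$, regarding $[0,1)$ as a circle, with $\{\delta_j=0\}=\{U\in Z_j\}$. We need every point to be covered by zero or two of the arcs, so that the number of nonzero $\delta_j$ is three or one. Since $\sum_j c_j=3-\sum_j(a_j+b_j)\in[1,2]$, this is not automatic, so I would instead allow the "all nonzero" region $N:=[0,1)\setminus\bigcup Z_j$. Concretely, place $Z_1=[0,c_1)$, $Z_2=[c_1,c_1+c_2)$ wrapped mod~$1$, and $Z_3$ starting at the point where $Z_2$ ends. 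Then each point is covered by one, two, or zero arcs. Because each $c_j\ls2/3$ and $c_j\gr1/3$, a parity-corrected version of this placement should achieve the required pattern. The precise arrangement: let the double-coverage have total length $s:=\sum_j c_j-1\in[0,1]$, and check that single coverage can be avoided. This is the step I expect to be the main obstacle, since the interval bookkeeping must exactly use the bounds $1/3\ls c_j\ls2/3$.

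\emph{Step 2: the signs.} On the complement of $Z_j$, assign $\delta_j=\pm1$ with the correct conditional proportions $a_j/(a_j+b_j)$. Where exactly one $\delta_j$ is nonzero, any sign works. Where all three are nonzero, the signs must be mixed, which forbids the events $(+,+,+)$ and $(-,-,-)$.

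\emph{Step 3: feasibility of the sign assignment.} The marginal constraints $a_j,b_j\ls1/3$ are what make it possible to avoid the two forbidden patterns on $N$. Since $\Pp\{\delta_j=1\}\ls1/3$ for each $j$, one can arrange the $+$ events as pieces of disjoint thirds of a further uniform variable. I would use the standard cyclic construction: split the mass of $N$ into consecutive parts and let $\delta_j=+1$ on $[j/3,\,j/3+\text{share})$ cyclically. This makes $\{\delta_j=+1\}$ pairwise disjoint, so the pattern $(+,+,+)$ never occurs, and likewise for $-$.

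A shortcut worth trying first is the simpler approach of Aden-Ali. Let $V$ be uniform on $[0,1)$, set $\delta_j=+1$ on an interval of length $a_j$ inside $[(j-1)/3,\,j/3)$ placed at its left end, and set $\delta_j=-1$ on an interval of length $b_j$ at the right end of the \emph{next} third, cyclically. Then the sets $\{\delta_j\neq0\}$ tile $[0,1)$ with overlaps. I would then verify directly that each point lies in an odd number of these sets with mixed signs when there are three, and that $a_j+b_j\gr1/3$ rules out uncovered points.
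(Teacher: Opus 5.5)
The paper gives no proof of this lemma; it is quoted from \cite[Lemma~6]{AdenAli}, so your attempt has to stand on its own. Your reduction is right: the number of nonzero $\delta_j$ must be $1$ or $3$, with both signs present when it is $3$. However, none of the concrete constructions you propose works, so as it stands there is a genuine gap.

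In Step~1, consecutive arcs of total length $\sum_jc_j\in[1,2]$ cover every point at least once. So your region $N$ is empty, and the set covered exactly once has measure $2-\sum_jc_j$. On that set exactly two $\delta_j$ are nonzero and the sum lies in $\{-2,0,2\}$. Counting shows what the target really is: the cell where only $\delta_j\neq0$ must have measure $(c_i+c_k-c_j)/2$, and $N$ must have measure $n:=1-\frac12\sum_jc_j=\frac12\bigl(\sum_j(a_j+b_j)-1\bigr)$. It is not a double-coverage set of measure $\sum_jc_j-1$.

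In Step~3, if the sets $\{\delta_j=+1\}$ are pairwise disjoint and so are the sets $\{\delta_j=-1\}$, then at most two terms are nonzero at any point. This is incompatible with $N$ having positive measure whenever $\sum_j(a_j+b_j)>1$. Nor can the proportion $a_j/(a_j+b_j)$ of Step~2 be used on $N$: for $a_j=1/3$ and $b_j=\tau$ small, the expected number of $+1$'s on $N$ is $3/(1+3\tau)>2$, which forces the pattern $(+,+,+)$.

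Finally, in your shortcut only $\delta_j$ (as $+1$) and $\delta_{j-1}$ (as $-1$) can be nonzero on the $j$th third. So three nonzero terms never occur, and the sum is $0$ on both the overlaps and the gaps; for $a_j=b_j=1/3$ it vanishes identically.

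What is missing is the right allocation of mass, and this is where $a_j,b_j\ls1/3$ enters, through $\sum_ja_j\ls1$ and $\sum_jb_j\ls1$.

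\emph{The cells.} Put $s_j:=a_j+b_j-n$. Then $2s_j=1+(a_j+b_j)-(a_i+b_i)-(a_k+b_k)\gr0$ and $n+\sum_js_j=1$, so $[0,1)$ splits into $N$ of length $n$ and cells $S_j$ of length $s_j$.

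\emph{The mass on $N$.} Choose $\alpha_j$, the $+1$ mass of $\delta_j$ inside $N$, with $\max\{0,n-b_j\}\ls\alpha_j\ls\min\{n,a_j\}$ and $n\ls\sum_j\alpha_j\ls2n$. This is possible. Each range is nonempty because $n\ls a_j+b_j$. Moreover $\sum_j\min\{n,a_j\}\gr n$: either some $a_j\gr n$, or the sum equals $\sum_ja_j\gr\frac12(\sum_ja_j+\sum_jb_j-1)=n$. Symmetrically, $\sum_j\max\{0,n-b_j\}=3n-\sum_j\min\{n,b_j\}\ls2n$.

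\emph{The signs on $N$.} Regard $N$ as a circle of length $n$ and lay arcs of lengths $\alpha_1,\alpha_2,\alpha_3$ consecutively. Put $\delta_j=+1$ on the $j$th arc and $\delta_j=-1$ on the rest of $N$. Since $\alpha_j\ls n$ and $\sum_j\alpha_j\in[n,2n]$, every point of $N$ lies in one or two arcs, so the three signs are mixed.

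\emph{The signs on $S_j$.} Put $\delta_j=+1$ on a piece of length $a_j-\alpha_j$, $\delta_j=-1$ on the remaining length $b_j-n+\alpha_j$, and the other two coordinates equal to $0$.

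Then the marginals are exactly $(a_j,b_j,1-a_j-b_j)$ and the sum is $\pm1$ everywhere.
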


We shall also use the following arrival-dependent form of~\cite[Proposition~2.7]{AT}, in which the individual failure probabilities are retained.

\begin{proposition}\label{prop:nonuniform-triplet}
Let $K\subset\R^m$ be a symmetric convex body, let $\mu$ be a probability law with density $f>0$ on $\operatorname{int}K$, extended by zero outside $K$, and let $v_1,\ldots,v_T\in\R^m$ be fixed in advance.
Put $\theta_t:=\Pp_{X\sim\mu}\{X\hbox{ is not }v_t\hbox{-balanced}\}$.
Then there is a randomized online signing such that
$$ \Pp\left\{ \max_{k\ls T}\norm{\sum_{t=1}^k\varepsilon_t v_t}_K>6 \right\} \ls3\sum_{t=1}^T\theta_t. $$
\end{proposition}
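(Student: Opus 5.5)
We would follow the construction behind \cite[Proposition~2.7]{AT} and \cite{AdenAli}. The algorithm runs three independent copies $X^{(1)},X^{(2)},X^{(3)}$ of the inhomogeneous Metropolis walk of Lemma~\ref{lem:metropolis-stationarity}, each started from an independent sample of $\mu$, and couples their increments at every step. At time $t$, let the states be $x_j=X^{(j)}_{t-1}$. If every $x_j$ is $v_t$-balanced, apply Lemma~\ref{lem:three-way-coupling} with $a_j=p_{v_t}^+(x_j)$ and $b_j=p_{v_t}^-(x_j)$. This gives increments $\delta_j$ with the correct Metropolis marginals and with $\delta_1+\delta_2+\delta_3\in\{-1,1\}$; set $\varepsilon_t:=\delta_1+\delta_2+\delta_3$. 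If some $x_j$ is not balanced, declare failure. From then on, move the three chains independently by their Metropolis transitions and choose $\varepsilon_t$ arbitrarily, for instance $\varepsilon_t=1$. The scheme is online, since step $t$ uses only $v_t$ and the current states.

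The first point is that, in either branch, each chain individually performs the exact $v_t$-Metropolis transition given the past. The coupling only correlates the three chains and never alters a single chain's conditional law. By Lemma~\ref{lem:metropolis-stationarity}, $X^{(j)}_{t-1}\sim\mu$ for every $j$ and $t$. We do not need independence between the chains, only these marginals. A union bound then gives
$$\Pp\{\hbox{failure occurs at some } t\}\ls\sum_{t=1}^T\sum_{j=1}^3\Pp\{X^{(j)}_{t-1}\hbox{ not }v_t\hbox{-balanced}\}=3\sum_{t=1}^T\theta_t.$$

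On the complementary event, $\varepsilon_t v_t=\sum_j\delta_j v_t=\sum_j\bigl(X^{(j)}_t-X^{(j)}_{t-1}\bigr)$ for every $t$. Summing over $t\ls k$ gives
$$\sum_{t=1}^k\varepsilon_tv_t=\sum_{j=1}^3\bigl(X^{(j)}_k-X^{(j)}_0\bigr).$$
The Metropolis chain never leaves $\operatorname{int}K$, because the density vanishes outside $K$ and so the acceptance factor is zero there. Hence each difference has $\norm{\cdot}_K\ls2$, using the symmetry of $K$, and the triangle inequality bounds the prefix sum by $6$ for every $k$. The bound holds deterministically off the failure event, which proves the proposition.

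The step needing the most care is the first one: checking that the conditional law of each single chain is unchanged by the coupling. This requires Lemma~\ref{lem:three-way-coupling} to reproduce exactly the prescribed marginals of each $\delta_j$ given the current states. It also requires the failure branch to switch to independent Metropolis moves rather than stopping the chains, so that the stationarity argument applies at all times $t$.
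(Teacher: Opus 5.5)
Your proposal is correct and follows essentially the same argument as the paper: three walks started from $\mu$, coupled through Lemma~\ref{lem:three-way-coupling} while all states are $v_t$-balanced and run independently after failure, with stationarity from Lemma~\ref{lem:metropolis-stationarity}, a union bound giving $3\sum_t\theta_t$, and the telescoping identity placing each $X^{(j)}_k-X^{(j)}_0$ in $2K$. The only cosmetic difference is that the paper asserts merely that the states lie in $K$ rather than in $\operatorname{int}K$, and that weaker fact is all the $2K$ bound needs.
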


\begin{proof}
Start three auxiliary states $X_{0,1},X_{0,2},X_{0,3}$ with marginal law $\mu$; they may initially be sampled independently.
At time $t$, as long as no failure has occurred and all three states $X_{t-1,j}$ are $v_t$-balanced, apply Lemma~\ref{lem:three-way-coupling} with $a_j=p_{v_t}^+(X_{t-1,j})$ and $b_j=p_{v_t}^-(X_{t-1,j})$.
Set
\begin{equation}\label{eq:triplet-update}
X_{t,j}=X_{t-1,j}+\delta_{t,j}v_t, \qquad \varepsilon_t:=\delta_{t,1}+\delta_{t,2}+\delta_{t,3}.
\end{equation}
By \eqref{eq:three-way-sign}, $\varepsilon_t\in\{-1,1\}$.
If at least one state is not $v_t$-balanced, declare failure, output an arbitrary sign, and evolve each auxiliary state according to its own transition probabilities \eqref{eq:metropolis-plus}--\eqref{eq:metropolis-zero}; after failure the three walks may be evolved independently.
This convention is only used to keep their marginal laws defined on the entire probability space.

At every successful step, the coupling in Lemma~\ref{lem:three-way-coupling} preserves the prescribed one-walk conditional marginal of each $\delta_{t,j}$.
The same is true by construction after failure.
Lemma~\ref{lem:metropolis-stationarity} therefore gives $X_{t,j}\sim\mu$ for $0\ls t\ls T$ and $1\ls j\ls3$.
Since the sequence $(v_t)$ is fixed independently of the private states, $\Pp\{X_{t-1,j}\hbox{ is not }v_t\hbox{-balanced}\}=\theta_t$.
Consequently, a union bound gives
\begin{equation}\label{eq:triplet-failure-union}
\Pp\{\hbox{failure by time }T\} \ls\sum_{t=1}^T\sum_{j=1}^3 \Pp\{X_{t-1,j}\hbox{ is not }v_t\hbox{-balanced}\} =3\sum_{t=1}^T\theta_t.
\end{equation}

On the complementary event, summing \eqref{eq:triplet-update} first over $j$ and then over $t\ls k$ yields $\sum_{t=1}^k\varepsilon_t v_t=\sum_{j=1}^3(X_{k,j}-X_{0,j})$.
Every auxiliary state lies in $K$.
Since $K$ is symmetric and convex, $X_{k,j}-X_{0,j}\in2K$, and therefore $\norm{\sum_{t=1}^k\varepsilon_t v_t}_K\ls6$ for every prefix $k$.
Combining this with \eqref{eq:triplet-failure-union} proves the proposition.
\end{proof}

For $\beta>0$, consider the compactly supported density introduced by Altschuler and Tikhomirov~\cite[Section~3]{AT}.
Set $q_\beta:=2+\frac{2}{\beta}$, $S(y):=1-y^2$, $A(y):=S(y)^{-\beta}$, and $\Phi_\beta(y):=e^{A(y)}$,
and define
\begin{equation}\label{eq:bump-density}
h_\beta(y):=Z_\beta^{-1}e^{-\Phi_\beta(y)}\1_{\{|y|<1\}},
\end{equation}
where $Z_\beta$ is the normalizing constant.
Extending $h_\beta$ by zero outside $(-1,1)$ gives a $C^\infty$ compactly supported function which vanishes to infinite order at the endpoints; this is the reason for the term ``bump density'' used in~\cite{AT}.
For this density we take $\varphi=\Phi_\beta$, $b=6$ in \eqref{eq:general-boundary}, and use the corresponding $G_r$ from \eqref{eq:general-envelope}.

The next lemma is the form of \cite[Lemmas~3.1--3.2]{AT} that we need for every real $r\gr1$; the proof is included to make this dependence explicit.

\begin{lemma}\label{lem:bump-input}
There are constants $C_\beta,c_\beta>0$ such that, for every real $r\gr1$,
\begin{align}
 \Pp\{Y\in B_r\} &\ls C_\beta\exp\{-\exp(c_\beta r^{\beta/2})\}, \label{eq:bump-boundary-tail}\\
 \Pp\{G_r(Y)>x\} &\ls C_\beta\exp\!\left[-\frac{c_\beta x}{\ell_{q_\beta}(x)}\right], \qquad1\ls x\ls r. \label{eq:bump-curvature-tail}
\end{align}
\end{lemma}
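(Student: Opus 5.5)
The plan is to reduce both estimates to one-dimensional computations with the explicit potential $\Phi_\beta=e^{A}$, $A=S^{-\beta}$, $S=1-y^2$. Since $h_\beta$ is symmetric, it suffices to work near $y=1$ and write $s=1-y\in(0,1]$, so that $S=s(2-s)\asymp s$. The density $e^{-e^{A}}$ is extremely small once $A$ is large, and $A$ is large exactly when $s$ is small. So all tail probabilities should be controlled by the measure of $\{s\ls s_0\}$ under $h_\beta$. The normalizing constant $Z_\beta$ is bounded below by a constant depending only on $\beta$, because the density is of order one on $|y|\ls1/2$.

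\emph{Boundary tail \eqref{eq:bump-boundary-tail}.}
On $B_r$ we have $s\ls 6r^{-1/2}$, hence $S\ls 12r^{-1/2}$ and $A\gr 12^{-\beta}r^{\beta/2}$.
This holds only once $6r^{-1/2}<1$; for bounded $r$ the bound is trivial after enlarging $C_\beta$.
Consequently
$$\Pp\{Y\in B_r\}\ls 2Z_\beta^{-1}\cdot 6r^{-1/2}\exp\{-\exp(c r^{\beta/2})\},$$
which is \eqref{eq:bump-boundary-tail}.

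\emph{Curvature tail \eqref{eq:bump-curvature-tail}.}
First compute
$$\Phi''=e^{A}\bigl(A''+(A')^2\bigr), \qquad A'=2\beta yS^{-\beta-1}, \qquad A''=2\beta S^{-\beta-1}+4\beta(\beta+1)y^2S^{-\beta-2}.$$
Thus, for $s$ small,
$$\Phi''(y)\asymp_\beta e^{A}S^{-2\beta-2}.$$
In the bulk region $s\gr1/2$, say, $\Phi''$ is bounded by a constant. The $\sup$ over $|u-y|\ls r^{-1/2}$ only moves $u$ within the window. For $y\notin B_r$ we have $s>6r^{-1/2}$, so every $u$ in the window has $1-|u|\gr s-r^{-1/2}\gr \tfrac56 s$. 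Hence $S(u)\gr cS(y)$, and
$$G_r(y)\ls 1+C_\beta\exp\bigl(C'A(y)\bigr)S(y)^{-2\beta-2}, \qquad C'=(6/5)^{\beta}+o(1).$$
This is the one delicate point. The constant $C'$ in the exponent cannot be removed, since $e^{A(u)}$ versus $e^{A(y)}$ differ by a power. I would handle it by defining $\Psi(s):=C\,e^{C'A}S^{-2\beta-2}$, which is decreasing in $s$. Then
$$\{G_r(Y)>x\}\subseteq\{s\ls s_x\}\cup\{\text{bulk, impossible for }x\gr C_\beta\},$$
where $\Psi(s_x)=x$.

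Solving for the threshold: $C'A(s_x)+(2\beta+2)\ln(1/S)=\ln x+O(1)$. Since $\ln(1/S)=\beta^{-1}\ln A$, this gives $A(s_x)\asymp \ln x$, more precisely $A(s_x)\gr \ln x/C' - C\ln\ln(ex)$. The probability of $\{s\ls s_x\}$ is at most $Z_\beta^{-1}s_x\exp(-e^{A(s_x)})$. We have
$$e^{A(s_x)}\gr x^{1/C'}/\ln^{C}(ex).$$
This is only a power $x^{1/C'}$, not $x/\ell_q(x)$, so the crude bound fails.

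The main obstacle is therefore to avoid losing the factor $C'$ in the exponent. I would fix this by exploiting the window precisely. Over a window of length $r^{-1/2}\ls s/6$, the change in $A$ is $|A'|r^{-1/2}\ls C\beta S^{-\beta-1}r^{-1/2}$. This is $O(1)$ provided $S^{-\beta-1}\ls c\,r^{1/2}$. In that case $e^{A(u)}\ls e^{O(1)}e^{A(y)}$, and the tail is
$$\exp(-e^{A})=\exp\bigl(-c\,x\,S^{2\beta+2}\bigr)=\exp\bigl(-c\,x/\ln^{(2\beta+2)/\beta}(ex)\bigr)=\exp\bigl(-c\,x/\ell_{q_\beta}(x)\bigr),$$
using $S\asymp A^{-1/\beta}\asymp\ln^{-1/\beta}x$ and $q_\beta=2+2/\beta$.

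The remaining case, $S^{-\beta-1}>c r^{1/2}$, would be handled using the restriction $x\ls r$. There $S\ls r^{-1/(2\beta+2)}$, so $A\gr c r^{\beta/(2\beta+2)}$. The whole region then has probability at most $\exp(-e^{c r^{\beta/(2\beta+2)}})$, which is at most $\exp(-cx)$ for $x\ls r$ because $e^{c r^{\gamma}}\gr c' r$. Combining the two cases gives \eqref{eq:bump-curvature-tail} with constants depending only on $\beta$.
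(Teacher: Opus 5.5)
Your proof is correct. It uses the same ingredients as the paper:
\begin{itemize}
\item the global bound $\Phi_\beta''\ls Ce^{A}S^{-2\beta-2}=Ce^{A}A^{q_\beta}$;
\item the mean value estimate $|A(u)-A(y)|\ls Cr^{-1/2}S(y)^{-\beta-1}$ on the window, valid because $S(u)\gr cS(y)$ off $B_r$;
\item the tail $\Pp\{e^{A(Y)}>s\}\ls2Z_\beta^{-1}e^{-s}$.
\end{itemize}
The difference is where you place the case split.

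The paper splits according to the level. If $e^{A(y)}\ls a_0x/\ell_{q_\beta}(x)$, then $x\ls r$ forces $A(y)\ls\ln r$. The window oscillation $Cr^{-1/2}A(y)^{1+1/\beta}$ is then $o(1)$ for large $r$, so $G_r(y)\ls x$. This gives the single implication $G_r(y)>x\Rightarrow e^{A(y)}>a_0x/\ell_{q_\beta}(x)$, and bounded $r$ is absorbed into the constants.

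You split according to the window. Where $A(y)^{1+1/\beta}\ls cr^{1/2}$, the oscillation is $O(1)$ for every $r$. The complementary region $\{A\gr c\,r^{\beta/(2\beta+2)}\}$ is discarded wholesale, because its doubly exponential mass is at most $e^{-c'r}\ls e^{-c'x}$.

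So you use $x\ls r$ to make the near-boundary region negligible. The paper instead uses it to show that the sublevel set $\{e^{A}\ls a_0x/\ell_{q_\beta}(x)\}$ already lies where the window is harmless. Your version is uniform in $r$, with no ``sufficiently large $r$'' step, at the cost of one extra tail term; the paper's is slightly shorter.

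Two small tightenings. First, the detour through the crude constant $C'$ is unnecessary. Second, in your first case, replace the heuristic line $\exp(-e^{A})=\exp(-cxS^{2\beta+2})$ with the implication $e^{A}A^{q_\beta}\gr cx\Rightarrow e^{A}\gr c'x/\ell_{q_\beta}(x)$. To prove it, check $A\ls\ln(ex)$ and $A>\ln(ex)$ separately. This matters because $S\asymp\ln^{-1/\beta}x$ holds only at the threshold, not throughout the event.
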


\begin{proof}
All constants in this proof may depend on $\beta$.
We first record the pointwise curvature estimate
\begin{equation}\label{eq:bump-pointwise-curvature}
0<\Phi_\beta''(y)\ls C e^{A(y)}A(y)^{q_\beta}, \qquad -1<y<1.
\end{equation}
Indeed,
$$ A'(y)=2\beta yS(y)^{-\beta-1}, \qquad A''(y)=2\beta S(y)^{-\beta-1} +4\beta(\beta+1)y^2S(y)^{-\beta-2}. $$
Consequently,
$$ \Phi_\beta''(y) =e^{A(y)}\bigl((A'(y))^2+A''(y)\bigr), \qquad (A'(y))^2+A''(y)\ls C A(y)^{2+2/\beta} =C A(y)^{q_\beta}. $$
Here the last estimate uses $A(y)\gr1$ and $S(y)^{-1}=A(y)^{1/\beta}$, and proves \eqref{eq:bump-pointwise-curvature}.

We next prove the boundary estimate.
For $y\in B_r$, $S(y)=(1-|y|)(1+|y|)\ls12r^{-1/2}$, and therefore $A(y)\gr12^{-\beta}r^{\beta/2}$ and $\Phi_\beta(y)\gr\exp(c r^{\beta/2})$.
Since $B_r$ has total length at most two, integrating the preceding pointwise bound gives
$$ \Pp\{Y\in B_r\} \ls2Z_\beta^{-1}\exp\{-\exp(c r^{\beta/2})\} $$
for every $r\gr1$, proving \eqref{eq:bump-boundary-tail}.

We next prove the curvature tail.
By increasing the leading constant, it suffices to treat $r$ and $x$ larger than constants depending on $\beta$.
Fix $y\notin B_r$ and $u$ with $|u-y|\ls r^{-1/2}$.
Since $|S(u)-S(y)|=|u-y|\,|u+y|\ls2r^{-1/2}\ls\frac13S(y)$, we have $S(u)\gr\frac23S(y)$; the same lower bound holds for every point of the segment joining $y$ and $u$.
The mean value theorem consequently gives
$$ |A(u)-A(y)| \ls C r^{-1/2}S(y)^{-\beta-1} =C r^{-1/2}A(y)^{1+1/\beta}. $$

Choose a constant $a_0>0$, to be fixed below, and suppose that
\begin{equation}\label{eq:bump-small-potential}
e^{A(y)}\ls a_0\frac{x}{\ell_{q_\beta}(x)}.
\end{equation}
After decreasing $a_0$ we may assume $a_0\ls1$.
Since $x\ls r$, condition \eqref{eq:bump-small-potential} implies $A(y)\ls\ln x\ls\ln r$.
Thus the right-hand side of the preceding estimate tends to zero as $r\to\infty$.
Hence, for all sufficiently large $r$, uniformly in $1\ls x\ls r$, we have $e^{A(u)}\ls2e^{A(y)}$ and $A(u)\ls2A(y)$.
Using \eqref{eq:bump-pointwise-curvature} and \eqref{eq:bump-small-potential}, we obtain
$$ \Phi_\beta''(u) \ls C e^{A(y)}A(y)^{q_\beta} \ls C a_0\frac{x}{\ln^{q_\beta}(ex)}(\ln x)^{q_\beta} \ls C a_0x. $$
Choose $a_0$ so that the last expression is at most $x/2$.
For $x\gr2$ this gives $G_r(y)\ls1+x/2\ls x$.
Hence $G_r(y)>x$ implies $e^{A(y)}>a_0x/\ell_{q_\beta}(x)$ for all sufficiently large $r$ and $2\ls x\ls r$.

Finally, for every $s\gr1$, the definition of the density gives $\Pp\{e^{A(Y)}>s\}=Z_\beta^{-1}\int_{\{e^{A(y)}>s\}}e^{-e^{A(y)}}\,dy\ls2Z_\beta^{-1}e^{-s}$.
Combining the last two estimates gives
$$ \Pp\{G_r(Y)>x\} \ls C\exp\left[-\frac{a_0 x}{\ell_{q_\beta}(x)}\right] $$
for the unbounded parameter range.
Enlarging $C$ covers $1\ls x<2$ and the bounded range of $r$, completing the proof of \eqref{eq:bump-curvature-tail}.
\end{proof}

%%%%%%%%%%%%%%%%%%%%%%%%%%%%%%%%%%%%%%%%%%%%%%%%%%%%%%%%%%%%%%%%%%%%%%%%%%%%%%%%%%%%%%%%%%%%%%%%%%%%%%%%%%%%%%%%%%%%%%%%%%%%%%%%%%%%%%
\section{Proof of the uniform theorem}\label{sec:critical-bump}
%%%%%%%%%%%%%%%%%%%%%%%%%%%%%%%%%%%%%%%%%%%%%%%%%%%%%%%%%%%%%%%%%%%%%%%%%%%%%%%%%%%%%%%%%%%%%%%%%%%%%%%%%%%%%%%%%%%%%%%%%%%%%%%%%%%%%%

The density used for Theorem~\ref{thm:main-online} is fixed for all scales.
For Theorem~\ref{thm:critical-uniform}, the common lower bound $d$ is known in advance, and we choose the invariant density depending on $d$.

Put $\ell_d:=\ln(ed)$ and $\Lambda_d:=\ell_d^2$.
For $d$ larger than an absolute constant consider the nondecreasing function
\begin{equation}\label{eq:critical-profile}
H_d^0(u):= \begin{cases} u^2,&1\ls u\ls\ell_d,\\[1mm] \Lambda_d,&\ell_d\ls u\ls d,\\[1mm] \Lambda_d\left(\dfrac{\ln(eu)}{\ell_d}\right)^3,&u\gr d. \end{cases}
\end{equation}
We use the following elementary regularization.

\begin{lemma}
There is a smooth nondecreasing function $H_d:[1,\infty)\to(0,\infty)$ and an absolute constant $C$ such that
\begin{equation}\label{eq:critical-profile-regularity}
C^{-1}H_d^0(u)\ls H_d(u)\ls C H_d^0(u), \qquad 0\ls uH_d'(u)\ls C H_d(u), \qquad H_d(2u)\ls C H_d(u)
\end{equation}
for every $u\gr1$.
\end{lemma}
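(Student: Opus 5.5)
The plan is to regularize $H_d^0$ by averaging it in logarithmic scale, which preserves monotonicity and two-sided comparability and automatically gives the bound on $uH_d'$. First we record that $H_d^0$ is continuous and nondecreasing on $[1,\infty)$. Continuity at $u=\ell_d$ holds because $\ell_d^2=\Lambda_d$, and at $u=d$ because $\ln(ed)/\ell_d=1$. Next we check that $H_d^0$ is \emph{doubling with an absolute constant}, i.e.\ $H_d^0(2u)\ls C_0H_d^0(u)$ for all $u\gr1$. It suffices to check this on each piece and at the junctions. On $[1,\ell_d]$ we have $(2u)^2=4u^2$, and the constant piece only helps. On the last piece,
$$\bigl(\ln(2eu)/\ln(eu)\bigr)^3\ls(1+\ln2)^3 .$$
Across the junctions we use monotonicity together with $H_d^0(2u)\ls\max$ of the piecewise formulas evaluated at $2u$. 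More generally, the same argument gives $H_d^0(e^su)\ls C_0e^{2|s|}H_d^0(u)$ and $H_d^0(e^{-s}u)\gr C_0^{-1}e^{-2|s|}H_d^0(u)$ for $|s|\ls1$ (with $u e^{-s}\gr1$ handled by extending $H_d^0$ by $H_d^0(1)=1$ on $(0,1)$).

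Define $g(s):=H_d^0(e^s)$ on $\R$, extended by $g(s)=1$ for $s<0$; this extension keeps $g$ nondecreasing. Fix a smooth probability density $\psi\ge0$ supported in $[-1,0]$ and set
$$ H_d(u):=\int_{-1}^0 g(\ln u - s)\,\psi(s)\,ds=\int g(t)\psi(\ln u-t)\,dt. $$
Then $H_d$ is smooth, since it is a convolution with a smooth kernel, and nondecreasing, since it is an average of translates of the nondecreasing $g$. For the two-sided comparison, note that for $s\in[-1,0]$ we have $\ln u-s\in[\ln u,\ln u+1]$. Monotonicity then gives $H_d(u)\gr H_d^0(u)$, and the doubling estimate gives $H_d(u)\ls H_d^0(eu)\ls C_0^2H_d^0(u)$. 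The doubling of $H_d$ then follows from
$$H_d(2u)\ls C H_d^0(2u)\ls C H_d^0(u)\ls C H_d(u).$$

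For the derivative, differentiate under the integral to get
$$uH_d'(u)=\int g(t)\psi'(\ln u-t)\,dt.$$
Since $\psi'(\ln u-t)$ is supported in $t\in[\ln u,\ln u+1]$, where $g\ls g(\ln u+1)\ls C_0^2H_d^0(u)$, we obtain
$$|uH_d'(u)|\ls\norm{\psi'}_{L^1}C_0^2H_d^0(u)\ls CH_d(u),$$
and nonnegativity of $H_d'$ comes from monotonicity. The main point to verify carefully is that the doubling constant is absolute, independent of $d$. This is where the specific choice of exponent $3$ in the last piece and the matching $\ell_d^2=\Lambda_d$ matter. The only delicate junction is near $u=d$, where the piece $\Lambda_d(\ln(eu)/\ell_d)^3$ grows by at most a factor $(1+\ln 2/\ell_d)^3\ls(1+\ln 2)^3$ per doubling. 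The requirement that $d$ exceed an absolute constant only ensures $\ell_d\ls d$, so that the profile is well defined.
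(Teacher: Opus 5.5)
Your proof is correct, but it regularizes a different object than the paper does.

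\textbf{The paper's route.} The paper works with $g_d^0(s)=\ln H_d^0(e^s)$. It observes that this function is nondecreasing and $3$-Lipschitz, with slopes $2$, $0$ and $3/(1+s)$ on the three pieces. It mollifies $g_d^0$ with a symmetric kernel of radius $1/10$ and sets $H_d(u)=\exp(g_d(\ln u))$. Then:
\begin{itemize}
\item $uH_d'(u)/H_d(u)=g_d'(\ln u)\in[0,3]$ exactly;
\item $|g_d-g_d^0|\ls 3/10$ gives the two-sided comparison;
\item doubling is just $g_d(s+\ln2)-g_d(s)\ls3\ln2$.
\end{itemize}
So all three properties come from one Lipschitz bound, with explicit constants.

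\textbf{Your route.} You average $H_d^0(e^s)$ itself against a kernel supported in $[-1,0]$. The one-sided support gives $H_d\gr H_d^0$ for free. Everything else is reduced to doubling of $H_d^0$ with an absolute constant, which you check piecewise, including the junction at $u=d$. This covers the upper comparison $H_d(u)\ls H_d^0(eu)\ls C H_d^0(u)$, the derivative bound $|uH_d'(u)|\ls\norm{\psi'}_{L^1}\,g(\ln u+1)$, and doubling of $H_d$.

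\textbf{Comparison.} The two key facts are really the same statement, bounded growth of $H_d^0$ on logarithmic scales, in additive versus multiplicative form. The paper's log-Lipschitz formulation passes directly through the convolution and needs no separate junction check. Yours avoids logarithms but pays with that junction verification. Your sentence about the junctions ("the maximum of the piecewise formulas at $2u$") is loosely phrased, though the inequalities it needs, for example $\Lambda_d\ls(2u)^2$ when $2u\gr\ell_d$, are easy to supply.

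\textbf{Minor remarks.}
\begin{itemize}
\item $\ell_d=1+\ln d\ls d$ holds for every $d\gr1$, so the profile is well defined without any largeness assumption on $d$.
\item Doubling of $H_d^0$ does not depend on the exponent $3$ specifically. Any fixed exponent $p$ in the last regime gives doubling with constant $\max\{4,(1+\ln2)^p\}$.
\item Your auxiliary claim about $H_d^0(e^{\pm s}u)$ for $|s|\ls1$ is never used and can be dropped.
\end{itemize}
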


\begin{proof}
Put $g_d^0(s):=\ln H_d^0(e^s)$ for $s\gr0$, and extend $g_d^0$ to $(-\infty,0)$ by $g_d^0(s)=2s$.
The values of the three formulas in \eqref{eq:critical-profile} agree at both junctions, so $g_d^0$ is continuous and nondecreasing.
At every point of differentiability, $0\ls(g_d^0)'(s)\ls3$.
Indeed, the derivative is $2$ in the first regime, $0$ in the second, and $3/(1+s)$ in the third.
Thus $g_d^0$ is globally $3$-Lipschitz.

Fix a nonnegative $C^\infty$ function $\eta$ supported on $[-1/10,1/10]$ with integral one, set $g_d:=g_d^0*\eta$, and put $H_d(u):=\exp(g_d(\ln u))$ for $u\gr1$.
Convolution preserves monotonicity, while the Lipschitz bound gives $0\ls g_d'(s)\ls3$ and $|g_d(s)-g_d^0(s)|\ls3/10$.
Consequently $H_d\asymp H_d^0$ and $0\ls uH_d'(u)=g_d'(\ln u)H_d(u)\ls3H_d(u)$.
Finally, $\ln\frac{H_d(2u)}{H_d(u)}=g_d(\ln u+\ln2)-g_d(\ln u)\ls3\ln2$, which proves the doubling estimate with an absolute constant.
\end{proof}

Any fixed exponent strictly larger than $2$ may be used in the last regime.

The following integral is bounded by an absolute constant:
\begin{equation}\label{eq:critical-compactness}
 \int_1^\infty\frac{du}{u\sqrt{H_d(u)}} \ls C\left( \int_1^{\ell_d}\frac{du}{u^2} +\frac1{\ell_d}\int_{\ell_d}^{d}\frac{du}{u} +\frac1{\ell_d}\int_d^\infty \frac{du}{u(\ln(eu)/\ell_d)^{3/2}} \right) \ls C. 
\end{equation}
The same integral, starting from $2$, is bounded below by an absolute positive constant.

\begin{lemma}\label{lem:critical-potential}
There are absolute constants $c_0,C_0>0$ such that, for every sufficiently large $d$, there is an even convex potential $\varphi_d\in C^2((-1,1))$ satisfying $1\ls\varphi_d<\infty$ and $\lim_{|y|\uparrow1}\varphi_d(y)=\infty$, with the following properties.
Put $W_d(y):=1+\varphi_d(y)H_d(\varphi_d(y))$.
Then, for every $y\in(-1,1)$ and every $h\in\R$ satisfying
\begin{equation}\label{eq:critical-local-smallness}
h^2W_d(y)\ls c_0,
\end{equation}
one has $y\pm h\in(-1,1)$ and
\begin{equation}\label{eq:critical-local-second-difference}
\varphi_d(y+h)+\varphi_d(y-h)-2\varphi_d(y) \ls C_0h^2W_d(y).
\end{equation}
Moreover, if $Y_d$ has density
\begin{equation}\label{eq:critical-density}
\rho_d(y):=Z_d^{-1}e^{-\varphi_d(y)}\1_{\{|y|<1\}},
\end{equation}
then
\begin{align}
 \Pp\{W_d(Y_d)>x\} &\ls C\exp(-c x^{1/3}), &&1\ls x\ls\ell_d^3, \label{eq:critical-small-tail}\\
 \Pp\{W_d(Y_d)>x\} &\ls C\exp\left(-\frac{cx}{\Lambda_d}\right), &&\ell_d^3\ls x\ls d, \label{eq:critical-middle-tail}\\
 \E\left[W_d(Y_d)\1_{\{W_d(Y_d)>d\}}\right] &\ls C\exp\left(-\frac{cd}{\Lambda_d}\right). \label{eq:critical-tail-moment}
\end{align}
All constants are absolute.
\end{lemma}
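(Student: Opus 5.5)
The plan is to build $\varphi_d$ as the solution of an autonomous first-order ODE that makes $\varphi_d'^2$ comparable to $\varphi_d^2H_d(\varphi_d)$. Then $\varphi_d''$ is automatically comparable to $\varphi_d H_d(\varphi_d)\ls W_d$, and the compactness integral \eqref{eq:critical-compactness} makes the support bounded. Concretely, put $G_d(u):=u^2H_d(u)$ for $u\gr1$, which is increasing with $uG_d'(u)\ls CG_d(u)$ by \eqref{eq:critical-profile-regularity}. Define the increasing function
$$\tau_d(s):=\int_1^s\frac{du}{\sqrt{G_d(u)-G_d(1)}},\qquad s\gr1.$$
Near $u=1$ the integrand has an integrable square-root singularity, since $G_d'(1)\asymp H_d(1)\asymp1$. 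For large $u$ the integrand is at most $C/(u\sqrt{H_d(u)})$. Hence $\tau_d(\infty)=:a_d$ is finite and bounded above and below by absolute constants, by \eqref{eq:critical-compactness} and the remark after it. Set $\varphi_d(\pm\tau_d(s)):=s$ on $(-a_d,a_d)$ and rescale $y\mapsto a_dy$ to obtain support $(-1,1)$. The rescaling factor is absolutely bounded, so it only changes constants. By construction $\varphi_d$ is even, $\varphi_d\gr1$, $\varphi_d\to\infty$ at $\pm1$, and
$$\varphi_d'^2=G_d(\varphi_d)-G_d(1),\qquad \varphi_d''=\tfrac12G_d'(\varphi_d)\in\bigl[0,\,C\varphi_dH_d(\varphi_d)\bigr].$$
The value $\varphi_d'(0)=0$ and the continuity of $\varphi_d''$ give $C^2$ regularity across $0$, and convexity follows from $G_d'\gr0$.

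For the local estimate \eqref{eq:critical-local-second-difference} I would use Taylor's formula. It reduces the claim to showing that $\varphi_d(\xi)\ls2\varphi_d(y)$ for all $|\xi-y|\ls|h|$, and in particular that $y\pm h$ stays in $(-1,1)$. Granting this, the doubling estimate $H_d(2u)\ls CH_d(u)$ gives $\varphi_d''(\xi)\ls CW_d(y)$, as required. To prove the comparison I would run a continuity (bootstrap) argument along the segment. From $(\ln\varphi_d)'\ls\sqrt{H_d(\varphi_d)}$, as long as $\varphi_d\ls2\varphi_d(y)$ on the traversed part one gets
$$|(\ln\varphi_d)'|\ls C\sqrt{H_d(\varphi_d(y))}\ls C\sqrt{W_d(y)}.$$
The change of $\ln\varphi_d$ over a length $|h|$ is therefore at most $C\sqrt{c_0}<\ln2$ once $c_0$ is small. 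The bound thus closes, and since $\varphi_d$ stays finite on the segment, the segment stays inside $(-1,1)$. I expect this step to be the main obstacle: the bootstrap, together with the use of the near-linear growth control $uH_d'\ls CH_d$, has to be organized carefully.

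The tails come from the bound
$$\Pp\{\varphi_d(Y_d)>s\}\ls Z_d^{-1}\cdot2\cdot e^{-s}\ls Ce^{-s},$$
where $Z_d\gr e^{-C}$ because $\varphi_d$ is bounded on a fixed neighborhood of $0$. It then suffices to invert $W_d=1+\varphi_dH_d(\varphi_d)$ using \eqref{eq:critical-profile}. For $x\ls\ell_d^3$, the event $W_d>x$ forces $\varphi_d\gr cx^{1/3}$. Indeed, if $\varphi_d\ls\ell_d$ then $W_d\asymp\varphi_d^3$, and otherwise $\varphi_d>\ell_d\gr x^{1/3}$. This gives \eqref{eq:critical-small-tail}. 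For $\ell_d^3\ls x\ls d$, on the range $\ell_d\ls\varphi_d\ls d$ we have $W_d\asymp\varphi_d\Lambda_d$, so $\varphi_d\gr cx/\Lambda_d$; if instead $\varphi_d>d$, the same conclusion holds trivially. This gives \eqref{eq:critical-middle-tail}. For \eqref{eq:critical-tail-moment}, I would note that $W_d\ls C\varphi_d^3$ in every regime, since $\Lambda_d\ls u^2$ for $u\gr\ell_d$ and $\Lambda_d(\ln(eu)/\ell_d)^3\ls u^2$ for $u\gr d$. The event $W_d>d$ forces $\varphi_d\gr s_0:=cd/\Lambda_d$. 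Hence
$$\E\bigl[W_d\1_{\{W_d>d\}}\bigr]\ls C\int_{s_0}^\infty s^3e^{-s}\,ds\ls Ce^{-s_0/2},$$
where the integral is taken against the law of $\varphi_d(Y_d)$ and bounded via the tail estimate above.
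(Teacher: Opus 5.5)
Your argument is correct, and from the local estimate on it coincides with the paper's. You use the same stopping-time bootstrap: the first time $\varphi_d$ doubles or the boundary is reached, controlled by $|(\ln\varphi_d)'|\ls C\sqrt{H_d(\varphi_d)}$ and the doubling of $H_d$. You also use the same integral Taylor bound and the same inversion of $W_d$ against $\Pp\{\varphi_d(Y_d)>s\}\ls Ce^{-s}$.

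What differs is the construction of the potential. The paper defines $\varphi_d$ by $\int_1^{\varphi_d(y)}du/(u\sqrt{H_d(u)})=a_dy^2$, i.e.\ by the first-order equation $\varphi_d'=2a_dy\,\varphi_d\sqrt{H_d(\varphi_d)}$. The width is then exactly the compactness integral \eqref{eq:critical-compactness}, with no singularity, and evenness and $C^2$ regularity at $0$ follow from an explicit expansion in $y^2$. The cost is that the curvature bound \eqref{eq:critical-global-curvature} needs a second differentiation, the estimate on $q_d'$, and the factor $|y|<1$.

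You instead impose the energy identity $\varphi_d'^2=G_d(\varphi_d)-G_d(1)$, so that $\varphi_d''=\tfrac12G_d'(\varphi_d)$ holds exactly. Convexity and $\varphi_d''\ls C\varphi_dH_d(\varphi_d)$ are then immediate from $uH_d'\ls CH_d$. The price is twofold:
\begin{itemize}
\item There is a square-root singularity at $u=1$ in $\tau_d$, which you must control uniformly in $d$. Use $G_d'(u)\gr2H_d(1)\gr c$ for all $u\gr1$, not only at $u=1$.
\item Regularity at the turning point $y=0$ has to come from uniqueness for $\varphi''=\tfrac12G_d'(\varphi)$ with $\varphi(0)=1$, $\varphi'(0)=0$.
\end{itemize}

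Two small items remain to fill in.
\begin{itemize}
\item The uniform bound $Z_d\gr c$ requires $\tau_d(2)\gr\int_1^2du/(u\sqrt{H_d(u)})\gr c$, so that $\varphi_d\ls2$ on an absolute neighbourhood of $0$ after rescaling.
\item In the middle range you skip the case $\varphi_d(y)<\ell_d$ with $x\gr\ell_d^3$. This case can occur, since $H_d\asymp H_d^0$ only up to constants. It is harmless: then $x<W_d(y)\ls C\ell_d^3$ gives $x/\Lambda_d\ls C\ell_d$, while $\varphi_d(y)\gr cx^{1/3}\gr c\ell_d$.
\end{itemize}
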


\begin{proof}
Let $a_d:=\int_1^\infty\frac{du}{u\sqrt{H_d(u)}}$.
By \eqref{eq:critical-compactness} and the corresponding lower bound,
\begin{equation}\label{eq:critical-width-bounds}
c\ls a_d\ls C.
\end{equation}
For $|y|<1$ define $\varphi_d(y)$ implicitly by
\begin{equation}\label{eq:critical-implicit-potential}
\int_1^{\varphi_d(y)} \frac{du}{u\sqrt{H_d(u)}} =a_d y^2.
\end{equation}
This defines an even function with $\varphi_d(0)=1$ and $\varphi_d(y)\to\infty$ as $|y|\uparrow1$.
Away from the origin, differentiating gives
\begin{equation}\label{eq:critical-potential-ode}
\varphi_d'(y) =2a_d y\,\varphi_d(y)\sqrt{H_d(\varphi_d(y))}.
\end{equation}
To justify the assertion at the origin explicitly, as $u\downarrow1$ we have
$$ \frac1{u\sqrt{H_d(u)}} = \frac1{\sqrt{H_d(1)}} - \left( \frac1{\sqrt{H_d(1)}}+ \frac{H_d'(1)}{2H_d(1)^{3/2}} \right)(u-1) +O((u-1)^2). $$
Integrating this expansion and then applying the result with $s=\varphi_d(y)$ in \eqref{eq:critical-implicit-potential} give
$$ \int_1^s\frac{du}{u\sqrt{H_d(u)}}=\frac{s-1}{\sqrt{H_d(1)}}+O((s-1)^2), \qquad \varphi_d(y)=1+a_d\sqrt{H_d(1)}\,y^2+O(y^4). $$
Hence $\varphi_d$ is $C^2$ at the origin and $\varphi_d'(0)=0$.
If $q_d(u):=u\sqrt{H_d(u)}$, then \eqref{eq:critical-profile-regularity} and a second differentiation of \eqref{eq:critical-potential-ode} give
$$ 0\ls q_d'(u)=\sqrt{H_d(u)}+\frac{uH_d'(u)}{2\sqrt{H_d(u)}}\ls C\sqrt{H_d(u)}, \qquad \varphi_d''(y)=2a_d q_d(\varphi_d(y))+2a_d yq_d'(\varphi_d(y))\varphi_d'(y). $$
Using again \eqref{eq:critical-potential-ode}, the preceding estimate, $|y|<1$ and $H_d(u)\gr c$, we get
\begin{equation}\label{eq:critical-global-curvature}
0\ls\varphi_d''(y) \ls C\varphi_d(y)H_d(\varphi_d(y)) \ls C W_d(y) \qquad(|y|<1). \end{equation}
Thus $\varphi_d$ is convex.
Moreover, \eqref{eq:critical-profile}--\eqref{eq:critical-profile-regularity} give $\int_1^2du/(u\sqrt{H_d(u)})\gr c$.
By \eqref{eq:critical-width-bounds}, we may choose an absolute $\delta>0$ such that $a_d\delta^2<c$.
Then \eqref{eq:critical-implicit-potential} and the monotonicity of the integral imply $\varphi_d(y)\ls2$ whenever $|y|\ls\delta$.
Since $\varphi_d\gr1$ on $(-1,1)$, it follows that
\begin{equation}\label{eq:critical-normalization}
c\ls Z_d\ls C.
\end{equation}

We prove the local assertion.
By \eqref{eq:critical-profile-regularity}, the function $K_d(u):=1+uH_d(u)$ satisfies $K_d(2u)\ls CK_d(u)$.
Fix $\theta\in\{-1,1\}$ and let $\tau$ be the first $s\in[0,|h|]$ for which either $y+\theta s\notin(-1,1)$ or $\varphi_d(y+\theta s)=2\varphi_d(y)$; if neither event occurs, put $\tau=|h|$.
Before $\tau$, \eqref{eq:critical-potential-ode}, the monotonicity and doubling of $H_d$, and $|y+\theta s|<1$ give
$$ \left|\frac{d}{ds}\ln\varphi_d(y+\theta s)\right|\ls C\sqrt{H_d(\varphi_d(y))}\ls C\sqrt{W_d(y)}, \qquad \left|\ln\frac{\varphi_d(y+\theta\tau)}{\varphi_d(y)}\right|\ls C|h|\sqrt{W_d(y)}. $$
Choose $c_0$ in \eqref{eq:critical-local-smallness} so that the last quantity is smaller than $\ln2$.
The equality $\varphi_d(y+\theta\tau)=2\varphi_d(y)$ is then impossible.
Nor can the boundary be reached first, since continuity and the divergence of $\varphi_d$ at the boundary would force the level $2\varphi_d(y)$ to be crossed earlier.
Hence $\tau=|h|$ for both signs and $\sup_{|s|\ls|h|}\varphi_d(y+s)\ls2\varphi_d(y)$.
The doubling property of $K_d$ now gives $\sup_{|s|\ls|h|}W_d(y+s)\ls C W_d(y)$.
Combining the preceding estimate with \eqref{eq:critical-global-curvature} and the identity
$$ \varphi_d(y+h)+\varphi_d(y-h)-2\varphi_d(y) =\int_{-|h|}^{|h|}(|h|-|s|)\varphi_d''(y+s)\,ds $$
gives \eqref{eq:critical-local-second-difference}.

We next turn to the tails.
From \eqref{eq:critical-normalization},
\begin{equation}\label{eq:critical-potential-tail}
\Pp\{\varphi_d(Y_d)>u\}\ls Ce^{-u}, \qquad u\gr1.
\end{equation}
For $1\ls u\ls\ell_d$, \eqref{eq:critical-profile}--\eqref{eq:critical-profile-regularity} give $W_d\ls C(1+u^3)$, whereas for $\ell_d\ls u\ls d$ they give $W_d\ls C(1+\Lambda_du)$.
We verify the two inversions, including the junction ranges.
Let $u=\varphi_d(y)$.
If $1\ls x\ls\ell_d^3$ and $W_d(y)>x$, then either $u\ls\ell_d$, in which case $x<C(1+u^3)$, or $u>\ell_d$, in which case $u\gr\ell_d\gr x^{1/3}$.
After absorbing the bounded range of $x$, both cases give
$$ W_d(y)>x \quad\Longrightarrow\quad \varphi_d(y)>c x^{1/3} \qquad(1\ls x\ls\ell_d^3). $$

Now let $\ell_d^3\ls x\ls d$.
If $\ell_d\ls u\ls d$, then $x<C(1+\Lambda_du)$ implies $u>cx/\Lambda_d$.
If $u>d$, the same conclusion is automatic.
It remains to consider $u<\ell_d$.
Here $x<C(1+u^3)\ls C\ell_d^3$, so necessarily $x\asymp\ell_d^3$ up to an absolute factor; moreover $u>cx^{1/3}\asymp\ell_d$, and hence again $u>c'x/\Lambda_d$.
Thus, after changing absolute constants, $W_d(y)>x$ implies $\varphi_d(y)>cx/\Lambda_d$.
Combining the two implications with \eqref{eq:critical-potential-tail} proves \eqref{eq:critical-small-tail} and \eqref{eq:critical-middle-tail}.

Finally, for sufficiently large $d$, $W_d(y)>d$ implies $\varphi_d(y)\gr cd/\Lambda_d$.
Indeed, if $u\ls\ell_d$, then $W_d(y)\ls C(1+\ell_d^3)<d$ for large $d$; if $\ell_d\ls u\ls d$, the estimate $W_d(y)\ls C(1+\Lambda_du)$ gives $u\gr cd/\Lambda_d$; and if $u>d$ the conclusion is immediate.
Moreover, uniformly in $d$ and $u\gr1$, $1+uH_d(u)\ls C(1+u^6)\ls Ce^{u/2}$.
In the last regime we used $\Lambda_d\ls d^2\ls u^2$ and $\ln(eu)/\ell_d\ls u$; the other two regimes are immediate.
Therefore, using also \eqref{eq:critical-normalization} and the fact that the support has length two,
$$ \begin{aligned} \E\left[W_d(Y_d)\1_{\{W_d(Y_d)>d\}}\right] &\ls C\int_{\{\varphi_d\gr cd/\Lambda_d\}} W_d(y)e^{-\varphi_d(y)}\,dy\\ &\ls C\int_{\{\varphi_d\gr cd/\Lambda_d\}} e^{-\varphi_d(y)/2}\,dy \ls C\exp\left(-\frac{cd}{\Lambda_d}\right), \end{aligned} $$
which proves \eqref{eq:critical-tail-moment}.
\end{proof}

Notice that \eqref{eq:critical-profile} and \eqref{eq:critical-global-curvature} imply $\varphi_d''(y)\ls C\Lambda_d\varphi_d(y)$ whenever $\varphi_d(y)\ls d$.
Thus the construction attains the finite-scale lower bound of Proposition~\ref{prop:finite-scale-barrier}, up to absolute constants.
The last part of the definition of $H_d$ is used only above this range.

The next estimate controls the part above the common truncation level $d$ in expectation rather than by a union bound over the coordinates.

\begin{lemma}\label{lem:critical-weighted-concentration}
There are absolute constants $K,C,c>0$ such that the following holds.
Let $d$ be sufficiently large, let $W_1,\ldots,W_m$ be independent copies of $W_d(Y_d)$, and let $w_1,\ldots,w_m\gr0$ satisfy
\begin{equation}\label{eq:critical-weights}
\max_iw_i\ls\frac1d, \qquad \sum_iw_i\ls1.
\end{equation}
Then
$$ \Pp\left\{\sum_{i=1}^m w_iW_i>K\right\} \ls C\exp\left(-\frac{cd}{\Lambda_d}\right). $$
\end{lemma}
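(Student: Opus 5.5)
Split each $W_i$ at the truncation level $d$: write $W_i=W_i\1_{\{W_i\ls d\}}+W_i\1_{\{W_i>d\}}=:W_i'+W_i''$, and bound the two weighted sums separately, asking for $\sum_iw_iW_i'\ls K-1$ by a Bernstein/Chernoff argument and $\sum_iw_iW_i''\ls 1$ by Markov's inequality.

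For the upper part, \eqref{eq:critical-tail-moment} and $\sum_iw_i\ls1$ give
$$\E\sum_iw_iW_i''\ls C\exp\left(-\frac{cd}{\Lambda_d}\right),$$
so Markov's inequality yields $\Pp\{\sum_iw_iW_i''>1\}\ls C\exp(-cd/\Lambda_d)$. This is the point of the remark preceding the lemma: no union bound over the (possibly unboundedly many) coordinates is needed.

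For the truncated part I would use the exponential moment method with parameter $\lambda:=cd/\Lambda_d$. Since $w_iW_i'\ls w_id\ls1$, we have $\lambda w_iW_i'\ls \lambda w_i d$. Independence gives
$$\E\exp\Bigl(\lambda\sum_iw_iW_i'\Bigr)=\prod_i\E e^{\lambda w_iW_i'},$$
and I will show that $\E e^{sW'}\ls 1+Cs$ for every $0\ls s\ls c/\Lambda_d$, where $W'$ denotes a generic truncated copy. Applying this with $s=\lambda w_i\ls \lambda/d=c/\Lambda_d$, the product is at most $\exp(C\lambda\sum_iw_i)\ls e^{C\lambda}$, and Markov's inequality then gives
$$\Pp\Bigl\{\sum_iw_iW_i'>K-1\Bigr\}\ls e^{-(K-1-C)\lambda}\ls \exp\left(-\frac{cd}{\Lambda_d}\right)$$
once $K$ is a large absolute constant.

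The main obstacle is the estimate $\E e^{sW'}\ls1+Cs$ uniformly for $s\ls c/\Lambda_d$, which I would prove by the layer-cake formula
$$\E e^{sW'}-1=\int_0^{d}se^{sx}\,\Pp\{W'>x\}\,dx.$$
I split the range of integration into three pieces.
\begin{itemize}
\item On $[0,1]$ the integrand is at most $se^{s}$, contributing $O(s)$.
\item On $[1,\ell_d^3]$ we have $sx\ls c\ell_d$, and \eqref{eq:critical-small-tail} gives the bound $s\int_1^{\ell_d^3}e^{sx-cx^{1/3}}\,dx$. The key check is that $sx\ls c'x^{1/3}$ on this range: since $x^{2/3}\ls\ell_d^2=\Lambda_d$, we get $sx\ls (c/\Lambda_d)x^{1/3}x^{2/3}\ls cx^{1/3}$. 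Choosing the constant in $\lambda$ small makes the exponent at most $-c''x^{1/3}$, so this piece is $O(s)$.
\item On $[\ell_d^3,d]$, \eqref{eq:critical-middle-tail} gives $\Pp\{W'>x\}\ls C\exp(-c_1x/\Lambda_d)$. With $s\ls c_1/(2\Lambda_d)$ the integrand is at most $s\,e^{-c_1x/(2\Lambda_d)}$, which integrates to $O(s\Lambda_d\,e^{-c\ell_d})$. Since $e^{-c\ell_d}$ is a negative power of $ed$ and $\Lambda_d=\ln^2(ed)$, this is $O(s)$ for large $d$.
\end{itemize}
Collecting the three pieces gives $\E e^{sW'}\ls 1+Cs$. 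Combining this with the Markov bound for the upper part proves the lemma with $K$ absolute.
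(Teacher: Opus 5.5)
Your proposal is correct and follows essentially the same route as the paper. The paper truncates via $\min\{W_i,d\}$ rather than $W_i\1_{\{W_i\ls d\}}$, but otherwise it uses the same argument: a layer-cake bound $\E e^{sW'}\ls 1+Cs$ for $s\ls c/\Lambda_d$, split at $1$ and $\ell_d^3$; a Chernoff step with $\lambda\asymp d/\Lambda_d$; and Markov's inequality with \eqref{eq:critical-tail-moment} for the part above $d$.
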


\begin{proof}
Put $\overline W_i:=\min\{W_i,d\}$.
We first claim that there are absolute $\kappa,C_0>0$ such that
\begin{equation}\label{eq:critical-truncated-mgf}
\E e^{\theta\overline W_i}\ls e^{C_0\theta}, \qquad 0\ls\theta\ls\frac{\kappa}{\Lambda_d}.
\end{equation}
For such $\theta$, $\E e^{\theta\overline W_i}=1+\theta\int_0^d e^{\theta x}\Pp\{W_i>x\}\,dx$.
Let $x_0:=\ell_d^3$.
For large $d$, $x_0\ls d$.
Since $x/\Lambda_d\ls x^{1/3}$ on $[1,x_0]$, we have $\theta x\ls\kappa x/\Lambda_d\ls\kappa x^{1/3}$.
Choose $\kappa$ smaller than half the constant in \eqref{eq:critical-small-tail}.
Then, after decreasing $\kappa$ once more for the second estimate,
$$ \begin{aligned} \int_1^{x_0}e^{\theta x}\Pp\{W_i>x\}\,dx &\ls C\int_1^\infty e^{-c x^{1/3}/2}\,dx\ls C,\\ \int_{x_0}^{d}e^{\theta x}\Pp\{W_i>x\}\,dx &\ls C\int_{x_0}^{\infty}\exp\left(-\frac{cx}{2\Lambda_d}\right)\,dx\ls C\Lambda_de^{-c\ell_d/2}\ls C. \end{aligned} $$
Here $x_0/\Lambda_d=\ell_d$, and $\Lambda_de^{-c\ell_d/2}=\ln^2(ed)(ed)^{-c/2}$ is uniformly bounded after the bounded range of $d$ is absorbed into the constant.
The interval $[0,1]$ contributes at most an absolute constant.
We have therefore shown $\E e^{\theta\overline W_i}\ls1+C\theta\ls e^{C_0\theta}$, which proves \eqref{eq:critical-truncated-mgf}.

Take $\lambda:=\frac{\kappa d}{\Lambda_d}$.
By \eqref{eq:critical-weights}, $\lambda w_i\ls\kappa/\Lambda_d$.
Independence and \eqref{eq:critical-truncated-mgf} yield
$$ \E\exp\left(\lambda\sum_iw_i\overline W_i\right) \ls \exp\left(C_0\lambda\sum_iw_i\right) \ls e^{C_0\lambda}. $$
Hence, for an absolute $K_0>C_0+1$,
\begin{equation}\label{eq:critical-truncated-sum}
\Pp\left\{\sum_iw_i\overline W_i>K_0\right\} \ls\exp\left(-\frac{cd}{\Lambda_d}\right).
\end{equation}

For the part above $d$, \eqref{eq:critical-tail-moment}, \eqref{eq:critical-weights} and Markov's inequality give
$$ \begin{aligned} \E\sum_iw_iW_i\1_{\{W_i>d\}} &\ls\left(\sum_iw_i\right)\E\left[W_d(Y_d)\1_{\{W_d(Y_d)>d\}}\right]\ls C\exp\left(-\frac{cd}{\Lambda_d}\right),\\ \Pp\left\{\sum_iw_iW_i\1_{\{W_i>d\}}>1\right\} &\ls C\exp\left(-\frac{cd}{\Lambda_d}\right). \end{aligned} $$
Combining this with \eqref{eq:critical-truncated-sum} proves the lemma with $K=K_0+1$.
\end{proof}

We can now prove the required one-step estimate.

\begin{proposition}\label{prop:critical-balance}
There are absolute constants $L,C,c>0$ such that, for every sufficiently large $d$ and every dimension $m$, the product law $X=L(Y_{d,1},\ldots,Y_{d,m})$, where $Y_{d,1},\ldots,Y_{d,m}$ are independent with density \eqref{eq:critical-density}, satisfies
\begin{equation}\label{eq:critical-balance-tail}
\Pp\{X\hbox{ is not }v\hbox{-balanced}\} \ls C\exp\left(-\frac{cd}{\Lambda_d}\right)
\end{equation}
for every $v\in\R^m$ with $\norm v_2\ls1$ and $\norm v_\infty\ls d^{-1/2}$.
\end{proposition}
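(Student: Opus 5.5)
We will reduce $v$-balance to a bound on a weighted sum of the variables $W_d(Y_{d,i})$, using the curvature criterion, the local second-difference estimate and the weighted concentration estimate proved above. The density of $X$ is $f(x)\propto\prod_i e^{-\varphi_d(x_i/L)}$ on $\operatorname{int}(LB_\infty^m)$, so $U(x)=\sum_i\varphi_d(x_i/L)$ and
$$ D_v(x)=\sum_{i\in\supp v}\bigl[\varphi_d(y_i+h_i)+\varphi_d(y_i-h_i)-2\varphi_d(y_i)\bigr], \qquad y_i=x_i/L,\quad h_i=v_i/L. $$
By Lemma~\ref{lem:curvature-criterion}, it suffices to show that outside an event of probability at most $C\exp(-cd/\Lambda_d)$ we have $x\pm v\in\operatorname{int}(LB_\infty^m)$ and $D_v(x)\ls2\ln2$.

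\emph{Step 1 (local applicability).} On the event $E:=\{\sum_i v_i^2W_d(Y_{d,i})\ls K\}$, every coordinate satisfies $h_i^2W_d(Y_{d,i})=L^{-2}v_i^2W_d(Y_{d,i})\ls K/L^2$. Choosing $L$ with $K/L^2\ls c_0$, hypothesis \eqref{eq:critical-local-smallness} holds for each $i$. Lemma~\ref{lem:critical-potential} then gives $y_i\pm h_i\in(-1,1)$, so $x\pm v$ lies in the interior, and also
$$ D_v(x)\ls C_0\sum_i h_i^2W_d(Y_{d,i})=\frac{C_0}{L^2}\sum_i v_i^2W_d(Y_{d,i})\ls\frac{C_0K}{L^2}. $$
Enlarging $L$ so that $C_0K/L^2\ls2\ln2$, every $x$ in $E$ is $v$-balanced. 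The key point is that the single event $E$ controls both the boundary issue and the curvature, so no separate boundary event or support-size restriction is needed.

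\emph{Step 2 (probability of $E^c$).} Set $w_i=v_i^2$. The hypotheses on $v$ give exactly \eqref{eq:critical-weights}: $\max_i w_i\ls1/d$ and $\sum_i w_i\ls1$. The $W_d(Y_{d,i})$ are independent copies of $W_d(Y_d)$, so Lemma~\ref{lem:critical-weighted-concentration} yields $\Pp(E^c)\ls C\exp(-cd/\Lambda_d)$, which is \eqref{eq:critical-balance-tail}.

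The main obstacle, the fact that a union bound over possibly unboundedly many coordinates is unavailable, has already been handled in Lemma~\ref{lem:critical-weighted-concentration} through the truncation at $d$ and the expectation bound \eqref{eq:critical-tail-moment}. What remains is to check that the constants chain correctly: $K$ and $c_0$ are absolute, so $L$ is absolute and independent of $d$ and $m$.
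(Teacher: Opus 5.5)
Your proposal is correct and is essentially the paper's own proof. It uses the same weights $w_i=v_i^2$ and the same single event $\{\sum_i w_iW_d(Y_{d,i})\ls K\}$ controlled by Lemma~\ref{lem:critical-weighted-concentration}. It then chooses an absolute $L$ with $K/L^2\ls c_0$ and $C_0K/L^2\ls 2\ln2$, so that Lemma~\ref{lem:critical-potential} gives both $Y_{d,i}\pm h_i\in(-1,1)$ and the second-difference bound, and it concludes with Lemma~\ref{lem:curvature-criterion}.
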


\begin{proof}
Put $w_i=v_i^2$.
Then $\max_iw_i\ls\frac1d$ and $\sum_iw_i=\norm v_2^2\ls1$.
Let $W_i=W_d(Y_{d,i})$.
By Lemma~\ref{lem:critical-weighted-concentration}, outside an event of probability at most the right-hand side of \eqref{eq:critical-balance-tail}, $\sum_iw_iW_i\ls K$, where $K$ is absolute.

Write $h_i=v_i/L$.
On this event, $h_i^2W_i\ls\frac{K}{L^2}$ for every $i$.
Choose the absolute scaling constant $L$ so large that $K/L^2\ls c_0$, with $c_0$ from Lemma~\ref{lem:critical-potential}.
Then $Y_{d,i}\pm h_i\in(-1,1)$ and, for the product potential of $X$,
$$ \varphi_d(Y_{d,i}+h_i)+\varphi_d(Y_{d,i}-h_i)-2\varphi_d(Y_{d,i})\ls C_0h_i^2W_i, \qquad D_v(X)\ls\frac{C_0}{L^2}\sum_iw_iW_i\ls\frac{C_0K}{L^2}. $$
Increasing $L$ once more, we may assume that the last quantity is at most $2\ln2$.
Lemma~\ref{lem:curvature-criterion} implies that $X$ is $v$-balanced.
This proves \eqref{eq:critical-balance-tail}.
\end{proof}

\begin{proof}[Proof of Theorem~\ref{thm:critical-uniform}]
Assume first that $d$ exceeds the absolute threshold in Proposition~\ref{prop:critical-balance}, and use its product law as the invariant measure in Proposition~\ref{prop:nonuniform-triplet}.
For every arrival $v_t$, $\theta_t\ls C\exp\left(-\frac{cd}{\Lambda_d}\right)$.
Since the support of the product law is $L B_\infty^m$, Proposition~\ref{prop:nonuniform-triplet} gives
$$ \Pp\left\{ \max_{k\ls T}\norm{\sum_{t=1}^k\varepsilon_tv_t}_\infty>6L \right\} \ls CT\exp\left(-\frac{cd}{\ln^2(ed)}\right), $$
which is \eqref{eq:critical-uniform-failure}.
The bounded range of $d$ is absorbed by increasing the leading constant $C$, in which case the probability estimate is vacuous.

For \eqref{eq:critical-uniform-threshold}, put $H:=\ln\frac{3T}{\varepsilon}$ and $M:=\ln(e+H)$.
If $d_0=C_1HM^2$, then $\ln(ed_0)\ls C_2M$ with an absolute $C_2$, provided $C_1$ is larger than an absolute constant.
Hence $\frac{d_0}{\ln^2(ed_0)}\gr cC_1H$.
The function $s\mapsto s/\ln^2(es)$ is increasing beyond an absolute constant.
Thus the same lower bound, with changed constants, holds for every $d\gr d_0$.
Choosing $C_1$ sufficiently large makes the right-hand side of \eqref{eq:critical-uniform-failure} at most $\varepsilon$.
The final high-probability statement follows by taking $\varepsilon=T^{-c}$ and adjusting the constants.
\end{proof}

\begin{corollary}\label{cor:critical-beck-fiala}
There are absolute constants $L,C,c>0$ such that the following holds.
Let $a_1,\ldots,a_T\in[-1,1]^m$ be fixed in advance and assume that every $a_t$ has at most $d$ nonzero coordinates.
Then there is a randomized online signing for which
$$ \Pp\left\{ \max_{k\ls T}\norm{\sum_{t=1}^k\varepsilon_t a_t}_\infty>6L\sqrt d \right\} \ls CT\exp\!\left[-\frac{cd}{\ln^2(ed)}\right]. $$
In particular, $d\gr C(\ln T)[\ln\ln(e^eT)]^2$ yields prefix discrepancy $O(\sqrt d)$ with probability at least $1-T^{-c}$.
\end{corollary}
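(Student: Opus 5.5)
The plan is to reduce Corollary~\ref{cor:critical-beck-fiala} to Theorem~\ref{thm:critical-uniform} by normalizing, as sketched in the introduction. Put $v_t:=a_t/\sqrt d$. Since $a_t\in[-1,1]^m$, we have $\norm{v_t}_\infty\ls d^{-1/2}$. Since $a_t$ has at most $d$ nonzero coordinates, each bounded by $1$ in absolute value,
$$\norm{v_t}_2^2=\frac1d\sum_{i\in\supp a_t}a_{t,i}^2\ls\frac{|\supp a_t|}{d}\ls1 .$$
Thus $(v_t)$ satisfies the hypotheses of Theorem~\ref{thm:critical-uniform} with the same common parameter $d$. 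The signing is defined online: after $a_t$ is revealed, feed $v_t$ to the algorithm of Theorem~\ref{thm:critical-uniform} and use its sign $\varepsilon_t$. This remains online and oblivious, because $v_t$ is a deterministic function of $a_t$ and the sequence $(a_t)$ is fixed in advance.

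Next, transfer the estimate. By linearity,
$$\sum_{t\ls k}\varepsilon_ta_t=\sqrt d\sum_{t\ls k}\varepsilon_tv_t .$$
Hence the event $\max_k\norm{\sum_{t\ls k}\varepsilon_ta_t}_\infty>6L\sqrt d$ coincides with the event $\max_k\norm{\sum_{t\ls k}\varepsilon_tv_t}_\infty>6L$. By \eqref{eq:critical-uniform-failure}, the probability of the latter is at most $CT\exp[-cd/\ln^2(ed)]$, with the same absolute constants.

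One minor point: $d$ in the corollary need not be an integer. If the sparsity bound is only given for an integer $d$, nothing changes. If $d<1$, the only admissible vectors are $a_t=0$, and the claim is trivial.

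For the final sentence, apply the last assertion of Theorem~\ref{thm:critical-uniform}: the condition $d\gr C(\ln T)[\ln\ln(e^eT)]^2$ gives failure probability at most $T^{-c}$, and on the complement the prefix discrepancy is at most $6L\sqrt d=O(\sqrt d)$. No step here is a real obstacle. The only thing to check carefully is the Euclidean bound, which is exactly where the sparsity hypothesis enters.
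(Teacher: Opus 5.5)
Your proposal is correct and follows the paper's proof. The paper applies Theorem~\ref{thm:critical-uniform} to $v_t=a_t/\sqrt d$ after checking $\norm{v_t}_2\ls1$ and $\norm{v_t}_\infty\ls d^{-1/2}$. Your extra remarks on the sparsity-based Euclidean bound, online-ness, rescaling the event by $\sqrt d$, and the $T^{-c}$ consequence only spell out what the paper leaves implicit.
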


\begin{proof}
Apply Theorem~\ref{thm:critical-uniform} to $v_t=a_t/\sqrt d$.
Then $\norm{v_t}_2\ls1$ and $\norm{v_t}_\infty\ls d^{-1/2}$.
\end{proof}

Taking the sparsity parameter equal to the ambient dimension gives the following consequence.

\begin{corollary}

There are absolute constants $L,C,c>0$ such that, for every fixed sequence $a_1,\ldots,a_T\in[-1,1]^m$, there is a randomized online signing satisfying
$$ \Pp\left\{ \max_{k\ls T}\norm{\sum_{t=1}^k\varepsilon_ta_t}_\infty >6L\sqrt m \right\} \ls CT\exp\!\left[-\frac{cm}{\ln^2(em)}\right]. $$
Consequently, if $T\ls\exp\!\left[\frac{c m}{2\ln^2(em)}\right]$, then this probability is at most $C\exp[-c m/(2\ln^2(em))]$.
\end{corollary}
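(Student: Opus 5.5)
This follows from Corollary~\ref{cor:critical-beck-fiala} with the sparsity parameter taken to be $d=m$. Every $a_t\in[-1,1]^m$ has at most $m$ nonzero coordinates, because $\R^m$ has only $m$ coordinates. So the hypothesis of that corollary holds with $d=m$, and it gives the displayed tail bound at level $6L\sqrt m$ with failure probability at most $CT\exp[-cm/\ln^2(em)]$.

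To check the normalization directly, put $v_t=a_t/\sqrt m$. Then $\norm{v_t}_\infty\ls m^{-1/2}$ and $\norm{v_t}_2^2\ls m\cdot m^{-1}=1$, so Theorem~\ref{thm:critical-uniform} applies with $d=m$. Multiplying the prefix sums back by $\sqrt m$ gives the threshold $6L\sqrt m$. The constants $L,C,c$ are the absolute constants of Theorem~\ref{thm:critical-uniform}, and they are the ones that appear in the condition on $T$.

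For the consequence, suppose $T\ls\exp[cm/(2\ln^2(em))]$. Then
$$ CT\exp\!\left[-\frac{cm}{\ln^2(em)}\right]\ls C\exp\!\left[\frac{cm}{2\ln^2(em)}-\frac{cm}{\ln^2(em)}\right]=C\exp\!\left[-\frac{cm}{2\ln^2(em)}\right]. $$

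There is no real obstacle here. The only points to watch are that the $c$ in the hypothesis on $T$ is the same constant as in the tail bound, and that small $m$ is already covered because the theorem absorbs the bounded range of $d$ into $C$.
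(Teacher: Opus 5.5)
Your proposal is correct and is the same as the paper's proof, which simply applies Corollary~\ref{cor:critical-beck-fiala} with $d=m$. Your direct normalization check via $v_t=a_t/\sqrt m$ is accurate, and so is the one-line computation for the consequence under $T\ls\exp[cm/(2\ln^2(em))]$.
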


\begin{proof}
Apply Corollary~\ref{cor:critical-beck-fiala} with $d=m$.
\end{proof}

%%%%%%%%%%%%%%%%%%%%%%%%%%%%%%%%%%%%%%%%%%%%%%%%%%%%%%%%%%%%%%%%%%%%%%%%%%%%%%%%%%%%%%%%%%%%%%%%%%%%%%%%%%%%%%%%%%%%%%%%%%%%%%%%%%%%%%
\section{A concentration estimate}\label{sec:concentration}
%%%%%%%%%%%%%%%%%%%%%%%%%%%%%%%%%%%%%%%%%%%%%%%%%%%%%%%%%%%%%%%%%%%%%%%%%%%%%%%%%%%%%%%%%%%%%%%%%%%%%%%%%%%%%%%%%%%%%%%%%%%%%%%%%%%%%%
We first record two elementary estimates.

\begin{lemma}\label{lem:calculus}
Let $q,a,c>0$.
For all sufficiently large $r$, the functions
$$ \frac{\ell_q(r)}{r}, \qquad r\exp\!\left[-\frac{cr}{\ell_q(r)}\right], \qquad r\exp\{-\exp(c r^a)\} $$
are nonincreasing, and, after enlarging the threshold if necessary,
$$ r\exp\!\left[-\frac{cr}{\ell_q(r)}\right] \ls \exp\!\left[-\frac{cr}{2\ell_q(r)}\right], \qquad r\exp\{-\exp(c r^a)\} \ls \exp\!\left[-\frac{cr}{\ell_q(r)}\right]. $$
\end{lemma}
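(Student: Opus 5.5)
The plan is to pass to logarithms throughout. Put $s:=\ln r$, so that $\ell_q(r)=(1+s)^q$, and handle each of the three functions by checking the sign of its logarithmic derivative in $r$ for large $r$. For the first function, $\ln(\ell_q(r)/r)=q\ln(1+\ln r)-\ln r$. Its derivative in $r$ is $r^{-1}\bigl(q/(1+\ln r)-1\bigr)$, which is negative once $\ln r>q-1$.

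For the second function, put $g(r):=r/\ell_q(r)$. Then
\[
g'(r)=\frac{1}{\ell_q(r)}\left(1-\frac{q}{1+\ln r}\right),
\]
which is at least $1/(2\ell_q(r))$ once $\ln r\gr 2q$. The logarithmic derivative of $r\exp[-cg(r)]$ is therefore at most
\[
\frac1r-\frac{c}{2\ell_q(r)}.
\]
This is negative as soon as $r>2\ell_q(r)/c$, which holds for large $r$ because $\ell_q(r)/r\to0$.

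For the third function, the logarithmic derivative of $r\exp\{-\exp(cr^a)\}$ is
\[
\frac1r-ac\,r^{a-1}\exp(cr^a),
\]
which is negative for large $r$ since $r^a\exp(cr^a)\to\infty$.

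For the two inequalities I would compare logarithms directly.

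\emph{First inequality.} Here $\ln r-cr/\ell_q(r)\ls -cr/(2\ell_q(r))$ is equivalent to $\ln r\ls cr/(2\ell_q(r))$. This holds for large $r$ because $r/(\ell_q(r)\ln r)\to\infty$: the ratio equals $e^s/\bigl((1+s)^q s\bigr)$, which tends to infinity since exponential growth dominates polynomial growth.

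\emph{Second inequality.} It suffices to show $\ln r+cr/\ell_q(r)\ls\exp(cr^a)$. The left side is at most $\ln r+cr$, and $\exp(cr^a)\gr (cr^a)^k/k!$ for any integer $k$. Taking $k>1/a$, the right side grows faster than $r$, so the inequality holds for large $r$.

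The only point needing mild care, hardly an obstacle, is that the thresholds depend on $q,a,c$. One simply takes the maximum of the finitely many thresholds produced above, and "enlarging the threshold" in the statement allows exactly this.
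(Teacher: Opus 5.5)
Your proposal is correct and follows essentially the same route as the paper. The paper also differentiates $\ell_q(r)/r$ and $r/\ell_q(r)$ to get the monotonicity for large $r$, and it derives the two inequalities from $\ln r=o(r/\ell_q(r))$ and $\ln r+r/\ell_q(r)=o(e^{cr^a})$; you just make the thresholds and growth comparisons explicit where the paper leaves them implicit.
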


\begin{proof}
Put $H(r)=\ln(er)$.
Differentiating $\ell_q(r)/r$ and $r/\ell_q(r)$ gives
$$ \left(\frac{\ell_q(r)}r\right)'=\frac{\ell_q(r)}{r^2}\left(\frac q{H(r)}-1\right), \qquad \left(\frac r{\ell_q(r)}\right)'=\frac1{\ell_q(r)}\left(1-\frac q{H(r)}\right). $$
The asserted monotonicities follow for large $r$; the two final estimates follow from $\ln r=o(r/\ell_q(r))$ and $\ln r+r/\ell_q(r)=o(e^{cr^a})$.
\end{proof}

\begin{lemma}\label{lem:aggregation}
Fix $q,A,a>0$.
There are constants $C,c>0$ such that, whenever $r_i\gr d\gr1$, $\sum_i r_i^{-1}\ls1$, and $\Pp(E_i)\ls A\exp\!\left[-\frac{a r_i}{\ell_q(r_i)}\right]$, one has
$$ \Pp\left(\bigcup_iE_i\right) \ls C\exp\!\left[-\frac{c d}{\ell_q(d)}\right]. $$
\end{lemma}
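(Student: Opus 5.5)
Start with the union bound, $\Pp(\bigcup_iE_i)\ls\sum_i\min\{1,A\exp[-ar_i/\ell_q(r_i)]\}$, and compare each summand with $r_i^{-1}$, since $\sum_ir_i^{-1}\ls1$ is the only global information available. Write
$$ A\exp\!\left[-\frac{ar_i}{\ell_q(r_i)}\right]=\frac1{r_i}\cdot Ar_i\exp\!\left[-\frac{ar_i}{\ell_q(r_i)}\right]. $$
By Lemma~\ref{lem:calculus}, applied with $c=a$, the function $r\mapsto r\exp[-ar/\ell_q(r)]$ is nonincreasing beyond some threshold $r_0=r_0(q,a)$. Beyond the same threshold it is also bounded by $\exp[-ar/(2\ell_q(r))]$.

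Suppose first that $d\gr r_0$. Then every $r_i\gr d\gr r_0$, and monotonicity gives
$$ r_i\exp\!\left[-\frac{ar_i}{\ell_q(r_i)}\right]\ls d\exp\!\left[-\frac{ad}{\ell_q(d)}\right]\ls\exp\!\left[-\frac{ad}{2\ell_q(d)}\right]. $$
Summing over $i$ and using $\sum_ir_i^{-1}\ls1$ yields $\Pp(\bigcup_iE_i)\ls A\exp[-ad/(2\ell_q(d))]$. This is the claim with $C=A$ and $c=a/2$.

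Now suppose $1\ls d<r_0$. Then the ratio $d/\ell_q(d)$ is bounded by a constant depending only on $q$ and $a$. Choose $C\gr\exp[c\,r_0/\ell_q(1)]$, which is legitimate because $\ell_q\gr\ell_q(1)=1$. With this choice the right-hand side is at least $1$, so the estimate holds trivially.

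The only real point is the first case. The comparison with $1/r_i$ has to be arranged so that the weighted sum telescopes against the constraint $\sum r_i^{-1}\ls1$, which is exactly what the monotonicity in Lemma~\ref{lem:calculus} provides; there is no need to count how many indices there are. Note also that the index set may be infinite, and the union bound still applies via countable subadditivity.
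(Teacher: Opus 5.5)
Your proof is correct and follows the paper's own argument: union bound, factoring out $1/r_i$, using the monotonicity of $r\mapsto r\exp[-ar/\ell_q(r)]$ and the bound $d\exp[-ad/\ell_q(d)]\ls\exp[-ad/(2\ell_q(d))]$ from Lemma~\ref{lem:calculus}, then $\sum_i r_i^{-1}\ls1$, with the bounded range of $d$ absorbed into $C$. Your explicit choice of $C$ for $1\ls d<r_0$ (via $\ell_q\gr1$) simply spells out the step the paper leaves implicit.
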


\begin{proof}
For large $d$, the union bound and Lemma~\ref{lem:calculus} give
$$ \Pp\left(\bigcup_iE_i\right) \ls A\sum_i\frac1{r_i} \left(r_i\exp\!\left[-\frac{a r_i}{\ell_q(r_i)}\right]\right) \ls A d\exp\!\left[-\frac{a d}{\ell_q(d)}\right] \ls C\exp\!\left[-\frac{c d}{\ell_q(d)}\right]. $$
The bounded range of $d$ is absorbed into $C$.
\end{proof}

The next theorem extends the almost-exponential average-tail argument in \cite[Lemma~3.3]{AT} to unequal truncation scales; the number of coordinates is replaced by the condition in \eqref{eq:abstract-scales}.

\begin{theorem}\label{thm:abstract-concentration}
Fix $q,A,a>0$, and let $\ell_q$ be as in \eqref{eq:ellq}.
There are constants $K,C,c>0$, depending only on $q,A,a$, with the following property.
Let $I$ be a finite index set, let $d\gr1$, and let $(r_i)_{i\in I}$ satisfy
\begin{equation}\label{eq:abstract-scales}
r_i\gr d, \qquad \sum_{i\in I}\frac1{r_i}\ls1.
\end{equation}
Let $(Z_i)_{i\in I}$ be independent nonnegative random variables such that
\begin{equation}\label{eq:abstract-tail}
\Pp\{Z_i>x\} \ls A\exp\!\left[-\frac{a x}{\ell_q(x)}\right], \qquad 1\ls x\ls r_i.
\end{equation}
Then
$$ \Pp\left\{ \sum_{i\in I}\frac{Z_i}{r_i}>K \right\} \ls C\exp\!\left[-\frac{c d}{\ell_q(d)}\right]. $$
\end{theorem}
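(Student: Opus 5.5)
We follow the pattern of Lemma~\ref{lem:critical-weighted-concentration}: truncate each $Z_i$ at its own scale, control the truncated sum through a moment generating function, and handle the part above the truncation levels by a union bound via Lemma~\ref{lem:aggregation}. Put $E_i:=\{Z_i>r_i\}$. By \eqref{eq:abstract-tail} with $x=r_i$, $\Pp(E_i)\ls A\exp[-ar_i/\ell_q(r_i)]$. Since \eqref{eq:abstract-scales} is exactly the hypothesis of Lemma~\ref{lem:aggregation}, we get $\Pp(\bigcup_iE_i)\ls C\exp[-cd/\ell_q(d)]$. Outside this event $Z_i=\overline Z_i:=\min\{Z_i,r_i\}$ for every $i$, so it suffices to bound $\Pp\{\sum_i\overline Z_i/r_i>K\}$.

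For the truncated sum we use the exponential moment with the common parameter $\lambda:=\kappa d/\ell_q(d)$, for a small $\kappa$ depending on $q,A,a$. Then $\theta_i:=\lambda/r_i\ls\kappa/\ell_q(d)$. The key claim is
$$\E e^{\theta_i\overline Z_i}\ls 1+C_0\theta_i\ls e^{C_0\theta_i}$$
for an absolute $C_0=C_0(q,A,a)$. To prove it we write $\E e^{\theta\overline Z}=1+\theta\int_0^{r_i}e^{\theta x}\Pp\{Z>x\}\,dx$. The integral over $[0,1]$ is bounded trivially. On $[1,r_i]$ the tail \eqref{eq:abstract-tail} bounds the integrand by $A\exp[\theta x-ax/\ell_q(x)]$. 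We split at $x_1:=d$.

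For $x\ls d$, the exponent satisfies $\theta x\ls\kappa x/\ell_q(d)\ls\kappa x/\ell_q(x)$, which is at most half of $ax/\ell_q(x)$ once $\kappa\ls a/2$. The integrand is then at most $A\exp[-ax/(2\ell_q(x))]$, which is integrable over $[1,\infty)$ with an integral depending only on $q,A,a$.

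For $d\ls x\ls r_i$ the comparison $\ell_q(x)\ls\ell_q(d)$ fails, so instead we use $\theta_i=\lambda/r_i$ together with $x\ls r_i$, which gives $\theta_ix\ls\lambda=\kappa d/\ell_q(d)$. By monotonicity of $x/\ell_q(x)$ (Lemma~\ref{lem:calculus}, for $d$ large), $ax/\ell_q(x)\gr ad/\ell_q(d)+a(x-d)/\ell_q(x)$ is not quite available in that form. The simplest route is to use $x/\ell_q(x)\gr c\,x/\ell_q(r_i)$ and $x/\ell_q(x)\gr d/\ell_q(d)$. Taking half of each shows the integrand is at most $A\exp[-(a/2-\kappa)d/\ell_q(d)-ax/(2\ell_q(r_i))]$. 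Integrating then gives at most $C\ell_q(r_i)\exp[-cd/\ell_q(d)]$.

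Multiplied by $\theta_i$, this contributes $\lambda\ell_q(r_i)r_i^{-1}e^{-cd/\ell_q(d)}\ls C\theta_i$. The bound uses $\ell_q(r_i)\ls C\,r_i$ trivially, but we actually need the factor in the form $C\theta_i$. Since $\lambda e^{-cd/\ell_q(d)}\ls C$ by Lemma~\ref{lem:calculus}, the contribution is $\theta_i\ell_q(r_i)\cdot O(1)$. This leftover $\ell_q(r_i)$ factor is the main obstacle. To remove it, we replace the split by a dyadic decomposition of $[d,r_i]$: on $[2^jd,2^{j+1}d]$ the integrand is at most $\exp[\lambda 2^{j+1}d/r_i-a2^jd/\ell_q(2^{j+1}d)]$. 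For $2^jd\ls r_i$ the first term is at most $2\kappa 2^jd/\ell_q(d)$, which is not comparable to the second term once $j$ is large. We would therefore fall back on the cruder estimate $\theta_ix\ls\lambda$, noting that $ax/\ell_q(x)$ grows, so that $\sum_j 2^jd\exp[\lambda-a2^jd/\ell_q(2^{j+1}d)]$ is dominated by its first term, which is $d\,e^{-c d/\ell_q(d)}\ls C$. With this the integral is $O(1)$ uniformly, which gives the claim.

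Given the claim, independence yields
$$\E\exp\Bigl(\lambda\sum_i\overline Z_i/r_i\Bigr)\ls\exp\Bigl(C_0\lambda\sum_i r_i^{-1}\Bigr)\ls e^{C_0\lambda}.$$
Markov's inequality with $K:=C_0+1$ then bounds the truncated probability by $e^{-\lambda}=\exp[-\kappa d/\ell_q(d)]$. Adding this to the union bound over the $E_i$ finishes the proof for large $d$. The bounded range of $d$ is absorbed into $C$.
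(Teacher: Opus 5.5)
Your proof is correct and has the same skeleton as the paper's proof. Both truncate at the individual scales $r_i$ and take a union bound via Lemma~\ref{lem:aggregation}. Both use the common Chernoff parameter $\lambda=\kappa d/\ell_q(d)$ with $\theta_i=\lambda/r_i$, the one-variable bound $\E e^{\theta_i\overline Z_i}\ls e^{C_0\theta_i}$, and $K=C_0+1$.

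The only real difference is how you prove the one-variable bound on the range $d\ls x\ls r_i$. The paper needs no split there. Since $r\mapsto\ell_q(r)/r$ is nonincreasing on $[d,\infty)$ for $d$ large, one has
$$\theta_i=\frac{\kappa d}{r_i\ell_q(d)}\ls\frac{\kappa}{\ell_q(r_i)}\ls\frac{\kappa}{\ell_q(x)}\qquad\hbox{for every }1\ls x\ls r_i.$$
Hence on the whole truncated range the integrand is dominated by the $r_i$-independent integrable function $A\exp[-ax/(2\ell_q(x))]$.

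You only used the weaker bound $\theta_i\ls\kappa/\ell_q(d)$, and that is why you had to treat $[d,r_i]$ separately. Your dyadic argument there is valid. You bound $\theta_i x\ls\lambda$, and on $[2^jd,2^{j+1}d]$ the integrand is at most $A\exp[\lambda-a2^jd/\ell_q(2^{j+1}d)]$. The $\lambda$ cancels in the ratio of consecutive terms, so they decay geometrically once $d$ is large. The first term is at most $d\,e^{-cd/\ell_q(d)}\ls C$, which holds with your $\kappa\ls a/2$ because $\ell_q(2d)/\ell_q(d)\to1$.

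The ``main obstacle'' with the factor $\ell_q(r_i)$ was self-inflicted. After extracting $e^{-(a/2-\kappa)d/\ell_q(d)}$ using $x/\ell_q(x)\gr d/\ell_q(d)$, keep the remaining factor as $\exp[-ax/(2\ell_q(x))]$. Do not weaken it to $\exp[-ax/(2\ell_q(r_i))]$. Its integral over $[1,\infty)$ depends only on $q,a$, and the dyadic decomposition becomes unnecessary.

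The paper's route is shorter and shows that $\theta_i$ stays below the tail scale pointwise on all of $[1,r_i]$. Yours additionally shows that the range above $d$ contributes only an exponentially small amount, which is not needed for the theorem.
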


\begin{proof}
Let $r_0$ be the maximum of the finitely many thresholds supplied by Lemma~\ref{lem:calculus} for the constants used below.
By increasing $C$, the conclusion is automatic for $1\ls d<r_0$, so assume $d\gr r_0$.
Put $\overline Z_i:=\min\{Z_i,r_i\}$.
First, \eqref{eq:abstract-tail} at $x=r_i$, the second condition in \eqref{eq:abstract-scales} and Lemma~\ref{lem:calculus} give
\begin{equation} \label{eq:abstract-truncation}
 \sum_{i\in I}\Pp\{Z_i>r_i\} \ls A\sum_{i\in I}\frac1{r_i}   \left(r_i\exp\!\left[-\frac{a r_i}{\ell_q(r_i)}\right]\right) \ls A d\exp\!\left[-\frac{a d}{\ell_q(d)}\right] \ls C\exp\!\left[-\frac{c d}{\ell_q(d)}\right].
\end{equation}

Choose $0<\kappa\ls\min\{1,a/2\}$ and put $\lambda:=\frac{\kappa d}{\ell_q(d)}$ and $\theta_i:=\frac{\lambda}{r_i}$.
Since $r\mapsto\ell_q(r)/r$ is nonincreasing on $[d,\infty)$, $\theta_i=\frac{\kappa d}{r_i\ell_q(d)}\ls\frac{\kappa}{\ell_q(r_i)}$.
For a nonnegative random variable $W\ls r_i$ almost surely, $\E e^{\theta_iW}=1+\theta_i\int_0^{r_i}e^{\theta_i x}\Pp\{W>x\}\,dx$.
Apply this identity to $W=\overline Z_i$.
On $[0,1]$ we use the trivial probability bound.
For $1\ls x\ls r_i$, monotonicity of $\ell_q$ and the preceding bound imply $\theta_i\ls\frac{\kappa}{\ell_q(r_i)}\ls\frac{\kappa}{\ell_q(x)}\ls\frac{a}{2\ell_q(x)}$.
Consequently,
$$ \E e^{\theta_i\overline Z_i}\ls1+\theta_i\left(e-1+A\int_1^\infty\exp\!\left[-\frac{a x}{2\ell_q(x)}\right]dx\right)\ls\exp(C_0\theta_i), $$
where the improper integral is finite because $x/\ell_q(x)$ dominates $\ln x$ as $x\to\infty$.

Independence and \eqref{eq:abstract-scales} now yield
$$ \E\exp\left(\lambda\sum_{i\in I}\frac{\overline Z_i}{r_i}\right) =\prod_{i\in I}\E e^{\theta_i\overline Z_i} \ls\exp\left(C_0\lambda\sum_{i\in I}\frac1{r_i}\right) \ls e^{C_0\lambda}.  $$
Choosing $K>C_0+1$ and applying Chernoff's inequality gives
$$ \Pp\left\{ \sum_{i\in I}\frac{\overline Z_i}{r_i}>K \right\} \ls\exp[-(K-C_0)\lambda] \ls\exp\!\left[-\frac{c d}{\ell_q(d)}\right]. $$
Combining this estimate with \eqref{eq:abstract-truncation} proves the theorem.
\end{proof}

The number of variables enters only through the second condition in \eqref{eq:abstract-scales}.

%%%%%%%%%%%%%%%%%%%%%%%%%%%%%%%%%%%%%%%%%%%%%%%%%%%%%%%%%%%%%%%%%%%%%%%%%%%%%%%%%%%%%%%%%%%%%%%%%%%%%%%%%%%%%%%%%%%%%%%%%%%%%%%%%%%%%%
\section{The Metropolis construction}\label{sec:compact}
%%%%%%%%%%%%%%%%%%%%%%%%%%%%%%%%%%%%%%%%%%%%%%%%%%%%%%%%%%%%%%%%%%%%%%%%%%%%%%%%%%%%%%%%%%%%%%%%%%%%%%%%%%%%%%%%%%%%%%%%%%%%%%%%%%%%%%

Let $\nu$, $Y$, $B_r$ and $G_r$ be as in Section~\ref{sec:preliminaries}.
Assume that, for some $q,A,a>0$ and $b\gr1$,
\begin{align}
 \Pp\{Y\in B_r\} &\ls A\exp\!\left[-\frac{a r}{\ell_q(r)}\right], \qquad r\gr1, \label{eq:boundary-tail-assumption}\\
 \Pp\{G_r(Y)>x\} &\ls A\exp\!\left[-\frac{a x}{\ell_q(x)}\right], \qquad 1\ls x\ls r. \label{eq:curvature-tail-assumption}
\end{align}

Let $Y_1,\ldots,Y_m$ be independent copies of $Y$ and let $L\gr1$.
We write $X=L(Y_1,\ldots,Y_m)$ and $Q_L=(-L,L)^m$, and denote by $\mu$ the law of $X$ and by $f$ its density.
Up to an additive constant in the exponent, $f(x)\propto e^{-U(x)}\1_{Q_L}(x)$, where $U(x)=\sum_{i=1}^m\varphi(x_i/L)$.

The next theorem replaces the support-size step in~\cite[Proposition~3.5]{AT}.

\begin{theorem}\label{thm:general-balance}
Assume that the measure $\nu$ in \eqref{eq:generic-density} satisfies \eqref{eq:boundary-tail-assumption}--\eqref{eq:curvature-tail-assumption} for some $q,A,a>0$ and $b\gr1$.
There are constants $L,C,c>0$, depending only on $q,A,a,b$, such that the following holds in every dimension $m$.
For every $v\in\R^m$ with $\norm{v}_2\ls1$, one has
\begin{equation}\label{eq:general-balance-tail}
\Pp\{X\hbox{ is not }v\hbox{-balanced}\} \ls C\exp\!\left[-\frac{c d(v)}{\ell_q(d(v))}\right],
\end{equation}
with the right-hand side interpreted as zero when $v=0$.
\end{theorem}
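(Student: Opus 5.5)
Fix $v\neq0$ with $\norm v_2\ls1$, write $d=d(v)$ and $r_i=r_i(v)=v_i^{-2}$ for $i\in\supp v$, so that \eqref{eq:coordinate-scale-relations} gives exactly the hypotheses \eqref{eq:abstract-scales} of Theorem~\ref{thm:abstract-concentration}. Only coordinates in $\supp v$ enter $D_v(X)=\sum_{i\in\supp v}\bigl[\varphi(Y_i+h_i)+\varphi(Y_i-h_i)-2\varphi(Y_i)\bigr]$ with $h_i=v_i/L$, so the plan is to bound each one-dimensional second difference by $h_i^2G_{r_i}(Y_i)$ off a boundary event, and then control $\sum_i G_{r_i}(Y_i)/r_i$ by the concentration theorem. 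The bad event will be $E:=\bigcup_i\{Y_i\in B_{r_i}\}\cup\{\sum_i Z_i/r_i>K\}$, where $Z_i:=G_{r_i}(Y_i)$.

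\textbf{Step 1 (boundary).} By \eqref{eq:boundary-tail-assumption}, $\Pp\{Y_i\in B_{r_i}\}\ls A\exp[-ar_i/\ell_q(r_i)]$, so Lemma~\ref{lem:aggregation} applied with $E_i=\{Y_i\in B_{r_i}\}$ bounds the union by $C\exp[-cd/\ell_q(d)]$. This is precisely where the support-size argument of \cite{AT} is replaced: the weights $1/r_i$ summing to at most one substitute for counting coordinates.

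\textbf{Step 2 (local curvature).} Off $B_{r_i}$ we have $1-|Y_i|>br_i^{-1/2}\gr r_i^{-1/2}\gr L|h_i|$ once $L\gr1$ (indeed $|h_i|=r_i^{-1/2}/L$), so $Y_i\pm h_i\in(-1,1)$, i.e. $X\pm v\in Q_L$, and by Taylor's formula with integral remainder
$$\varphi(Y_i+h_i)+\varphi(Y_i-h_i)-2\varphi(Y_i)\ls h_i^2\sup_{|u-Y_i|\ls|h_i|}(\varphi''(u))_+\ls \frac{G_{r_i}(Y_i)}{L^2r_i},$$
using $|h_i|\ls r_i^{-1/2}$. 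Hence $D_v(X)\ls L^{-2}\sum_i Z_i/r_i$.

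\textbf{Step 3 (concentration).} Define $Z_i=G_{r_i}(Y_i)$ (zero on $B_{r_i}$); these are independent, nonnegative, and by \eqref{eq:curvature-tail-assumption} satisfy \eqref{eq:abstract-tail} on $1\ls x\ls r_i$. Theorem~\ref{thm:abstract-concentration} gives $\Pp\{\sum_iZ_i/r_i>K\}\ls C\exp[-cd/\ell_q(d)]$ with $K$ depending on $q,A,a$. Choosing $L$ with $K/L^2\ls2\ln2$, on the complement of $E$ we get $D_v(X)\ls2\ln2$ and Lemma~\ref{lem:curvature-criterion} yields that $X$ is $v$-balanced. Summing the two failure bounds proves \eqref{eq:general-balance-tail}; $v=0$ is trivially balanced since $p^\pm=1/3$.

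The only delicate point I expect is Step 2's measurability/indexing: $G_r$ depends on the scale $r_i$, which varies with the coordinate, and the window $|u-y|\ls r^{-1/2}$ must contain $|h_i|$; this is why $h_i$ is scaled by $L\gr1$ and why Theorem~\ref{thm:abstract-concentration} was formulated with unequal truncation scales. Constants $L,C,c$ then depend only on $q,A,a,b$.
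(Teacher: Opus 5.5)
Your proposal is correct and follows the paper's proof essentially step for step. You handle the boundary events with Lemma~\ref{lem:aggregation} using the weights $1/r_i$, bound each one-dimensional second difference by $h_i^2G_{r_i}(Y_i)$ via the integral remainder, and apply Theorem~\ref{thm:abstract-concentration} to the independent variables $G_{r_i}(Y_i)$, choosing $L$ so that $K/L^2\ls2\ln2$.
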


The bound depends on $v$ only through $d(v)$ and is independent of $|\supp v|$.

\begin{proof}
For $v=0$ the assertion is trivial.
Assume that $v\neq0$.
Let $I=\supp v$ and put $r_i=r_i(v)$ as in \eqref{eq:coordinate-scales}.
By \eqref{eq:coordinate-scale-relations}, $r_i\gr d(v)$ and $\sum_{i\in I}r_i^{-1}=\norm{v}_2^2\ls1$.
By Lemma~\ref{lem:aggregation} and \eqref{eq:boundary-tail-assumption}, outside an event of probability at most
\begin{equation}\label{eq:general-boundary-failure}
C\exp\!\left[-\frac{c d(v)}{\ell_q(d(v))}\right],
\end{equation}
we have $Y_i\notin B_{r_i}$ for every $i\in I$.

Set $z_i=v_i/L$.
Since $b\gr1$, $L\gr1$, and $Y_i\notin B_{r_i}$, we have $|z_i|\ls|v_i|=r_i^{-1/2}<1-|Y_i|$, so $Y_i\pm z_i\in(-1,1)$.
The integral central-difference formula gives
$$ \begin{aligned} \varphi(Y_i+z_i)+\varphi(Y_i-z_i)-2\varphi(Y_i) &=z_i^2\int_{-1}^1(1-|s|)\varphi''(Y_i+s z_i)\,ds\ls z_i^2G_{r_i}(Y_i),\\ D_v(X)&\ls\frac1{L^2}\sum_{i\in I}\frac{G_{r_i}(Y_i)}{r_i}. \end{aligned} $$
The random variables $G_{r_i}(Y_i)$ are independent and satisfy the hypotheses of Theorem~\ref{thm:abstract-concentration} by \eqref{eq:curvature-tail-assumption}.
Hence there is a constant $K$ such that, outside an additional event of the same order as \eqref{eq:general-boundary-failure}, $\sum_{i\in I}\frac{G_{r_i}(Y_i)}{r_i}\ls K$.
Choose $L:=\max\left\{1,\sqrt{\frac{K}{2\ln2}}\right\}$.
Then the preceding estimate gives $D_v(X)\ls2\ln2$, while the boundary event already ensures $X\pm v\in(-L,L)^m$.
Lemma~\ref{lem:curvature-criterion} therefore implies that $X$ is $v$-balanced.
Combining the two exceptional events proves \eqref{eq:general-balance-tail}.
\end{proof}

Combining Theorem~\ref{thm:general-balance} with Proposition~\ref{prop:nonuniform-triplet} gives the following theorem.

\begin{theorem}\label{thm:general-online}
Assume that $\nu$ satisfies \eqref{eq:boundary-tail-assumption}--\eqref{eq:curvature-tail-assumption} for some $q,A,a>0$ and $b\gr1$.
There are constants $L,C,c>0$ and a randomized online algorithm such that, for every fixed sequence $v_1,\ldots,v_T\in B_2^m$,
$$ \Pp\left\{ \max_{k\ls T}\norm{\sum_{t=1}^k\varepsilon_t v_t}_\infty>6L \right\} \ls C\sum_{t=1}^T \exp\!\left[-\frac{c d_t}{\ell_q(d_t)}\right], $$
where $d_t=d(v_t)$ and a zero-vector summand is interpreted as zero.
\end{theorem}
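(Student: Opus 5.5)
The plan is to combine Theorem~\ref{thm:general-balance} with Proposition~\ref{prop:nonuniform-triplet}, using the product law $\mu$ of $X=L(Y_1,\ldots,Y_m)$ as the invariant measure and $K=Q_L$ as the target body. Here $L$ is the constant supplied by Theorem~\ref{thm:general-balance}.

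\emph{Step 1.} Fix $L,C,c$ as in Theorem~\ref{thm:general-balance}; they depend only on $q,A,a,b$ and not on $m$. The algorithm is the triplet scheme from the proof of Proposition~\ref{prop:nonuniform-triplet}, run with $f$ equal to the density of $\mu$ on $\operatorname{int}Q_L$. This density is positive there, since $\varphi<\infty$ on $(-1,1)$. The algorithm uses only the sequence and its private randomness, so it is legitimate against an oblivious adversary.

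\emph{Step 2.} For each $t$, put $\theta_t=\Pp_{X\sim\mu}\{X\hbox{ is not }v_t\hbox{-balanced}\}$. Since $\norm{v_t}_2\ls1$, Theorem~\ref{thm:general-balance} gives
$$\theta_t\ls C\exp\!\left[-\frac{cd_t}{\ell_q(d_t)}\right].$$
When $v_t=0$ we have $\theta_t=0$: every state is $0$-balanced, because $p_0^\pm=1/3$.

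\emph{Step 3.} Proposition~\ref{prop:nonuniform-triplet} bounds the probability that some prefix has $\norm{\cdot}_{Q_L}>6$ by $3\sum_t\theta_t$. Since $\norm{x}_{Q_L}=\norm{x}_\infty/L$, the event $\norm{\cdot}_{Q_L}>6$ is exactly $\norm{\cdot}_\infty>6L$. Substituting the bound from Step~2 gives the claim with leading constant $3C$.

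There is no real obstacle here. The only points needing care are that the constants from Theorem~\ref{thm:general-balance} are dimension-free and uniform in $v$, and the bookkeeping for zero vectors.
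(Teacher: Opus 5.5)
Your proposal is correct and is essentially the paper's own proof. The paper likewise bounds each $\theta_t$ by Theorem~\ref{thm:general-balance}, feeds these bounds into Proposition~\ref{prop:nonuniform-triplet} with $K=LB_\infty^m$ (the closed cube whose interior is $Q_L$), and uses $\norm{x}_{LB_\infty^m}=L^{-1}\norm{x}_\infty$ to convert the failure event.
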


Lemma~\ref{lem:bump-input} verifies the two assumptions \eqref{eq:boundary-tail-assumption}--\eqref{eq:curvature-tail-assumption} for the density \eqref{eq:bump-density}.

\begin{proof}
Apply Theorem~\ref{thm:general-balance} to each $v_t$ and insert the resulting values of $\theta_t$ into Proposition~\ref{prop:nonuniform-triplet} with $K=L B_\infty^m$.
Since $\norm{x}_{L B_\infty^m}=L^{-1}\norm{x}_\infty$, the conclusion has the displayed form.
\end{proof}

\begin{proof}[Proof of Theorem~\ref{thm:main-online}]
Lemma~\ref{lem:bump-input}, together with Lemma~\ref{lem:calculus}, shows that \eqref{eq:boundary-tail-assumption}--\eqref{eq:curvature-tail-assumption} hold with $q=q_\beta$.
Apply Theorem~\ref{thm:general-online}.
\end{proof}

%%%%%%%%%%%%%%%%%%%%%%%%%%%%%%%%%%%%%%%%%%%%%%%%%%%%%%%%%%%%%%%%%%%%%%%%%%%%%%%%%%%%%%%%%%%%%%%%%%%%%%%%%%%%%%%%%%%%%%%%%%%%%%%%%%%%%%
\section{Lower bounds}\label{sec:offline-consequence}
%%%%%%%%%%%%%%%%%%%%%%%%%%%%%%%%%%%%%%%%%%%%%%%%%%%%%%%%%%%%%%%%%%%%%%%%%%%%%%%%%%%%%%%%%%%%%%%%%%%%%%%%%%%%%%%%%%%%%%%%%%%%%%%%%%%%%%

We shall use the two-dimensional oblivious lower bound of Kulkarni, Reis and Rothvoss~\cite[Theorem~4]{KRR} in the following form.

\begin{theorem}\label{thm:KRR-lower}
There are absolute constants $c,c'>0$ such that, for every $T$, there is an oblivious distribution over unit vectors $u_1,\ldots,u_T\in\R^2$ with the property that every randomized online signing algorithm satisfies
$$ \Pp\left\{ \max_{k\ls T}\norm{\sum_{t=1}^k\varepsilon_tu_t}_\infty \gr c\sqrt{\ln T} \right\} \gr1-2^{-T^{c'}}. $$
\end{theorem}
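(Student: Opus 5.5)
The plan is to derive this statement directly from \cite[Theorem~4]{KRR}, checking three reductions: the choice of norm, the passage from deterministic to randomized algorithms, and the way the probability bound is stated. No new construction is needed. The proof is a careful translation of the cited result into the normalization used here.

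\emph{Step 1: the deterministic case.} First I will recall what \cite{KRR} proves. There is an oblivious distribution $\mathcal D_T$ over sequences of unit vectors $u_1,\ldots,u_T\in\R^2$ such that every deterministic online signing rule satisfies
$$\max_{k\ls T}\norm{\sum_{t\ls k}\varepsilon_tu_t}_2\gr c_1\sqrt{\ln T}$$
outside an event of $\mathcal D_T$-probability at most $2^{-T^{c'}}$. This is the step I expect to need the most care. Concretely, I must check whether \cite{KRR} states the bound for the Euclidean norm or the $\ell_\infty$ norm, for the prefix maximum or for a single prefix, and with which failure probability. If their statement is only in expectation or with constant probability, I will have to trace into their proof for the concentration. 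I expect the $2^{-T^{c'}}$ bound to come from their argument applied to independent blocks of length $T^{c''}$: each block independently forces a large prefix with constant probability.

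\emph{Step 2: change of norm.} In $\R^2$ we have $\norm x_\infty\gr 2^{-1/2}\norm x_2$. Hence a Euclidean lower bound $c_1\sqrt{\ln T}$ for some prefix gives the $\ell_\infty$ lower bound with $c=c_1/\sqrt2$. If \cite{KRR} already uses $\ell_\infty$, this step is vacuous.

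\emph{Step 3: randomized algorithms.} A randomized online signing is a mixture of deterministic online rules, indexed by a private seed $\omega$ that is independent of the sequence, since the adversary is oblivious. For each fixed $\omega$, Step~1 applies to the deterministic rule $\mathcal A_\omega$. So the conditional probability, over $\mathcal D_T$, that all prefixes stay below $c\sqrt{\ln T}$ is at most $2^{-T^{c'}}$. By Fubini and the independence of $\omega$ from $(u_t)$, integrating over $\omega$ gives the same bound for the joint probability. This yields the displayed inequality, with all constants absolute.
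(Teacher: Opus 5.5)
Your proposal is correct and follows the paper's route: the paper gives no separate proof of this statement and simply imports it from \cite[Theorem~4]{KRR}. Your Steps~2 and~3 are the routine checks implicit in that citation, and the paper uses nothing beyond them. Step~2 is the comparison $\norm{x}_\infty\gr 2^{-1/2}\norm{x}_2$ in $\R^2$. Step~3 reduces a randomized algorithm to deterministic ones via Fubini, which is valid because its seed is independent of the oblivious sequence.
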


By Theorem~\ref{thm:KRR-lower}, we obtain the following lower bound.

\begin{proposition}\label{prop:flattened-lower}
There are absolute constants $c,c'>0$ with the following property.
For every integer $d\gr1$ and every sufficiently large $T$, against every randomized online signing algorithm there is an oblivious distribution over sequences $v_1,\ldots,v_T\in\R^{2d}$ satisfying $\norm{v_t}_2=1$ and $\norm{v_t}_\infty\ls d^{-1/2},$ such that, with probability at least $1-2^{-T^{c'}}$,
$$ \max_{k\ls T}\norm{\sum_{t=1}^k\varepsilon_t v_t}_\infty \gr c\sqrt{\frac{\ln T}{d}}. $$
Consequently, no online algorithm can guarantee a universal constant prefix discrepancy throughout the regime $d=o(\ln T)$.
\end{proposition}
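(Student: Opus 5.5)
The plan is to tensorize the two-dimensional construction of Theorem~\ref{thm:KRR-lower}. Let $(u_t)_{t\ls T}$ be the oblivious distribution over unit vectors in $\R^2$ given there. Identify $\R^{2d}$ with $(\R^2)^d$, i.e.\ $d$ blocks of two coordinates, and set
$$ v_t:=d^{-1/2}(u_t,u_t,\ldots,u_t)\in\R^{2d}. $$
Then $\norm{v_t}_2^2=d\cdot d^{-1}\norm{u_t}_2^2=1$, and $\norm{v_t}_\infty=d^{-1/2}\norm{u_t}_\infty\ls d^{-1/2}$ because $\norm{u_t}_\infty\ls\norm{u_t}_2=1$. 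The distribution over $(v_t)$ is oblivious because that over $(u_t)$ is, and the map $u\mapsto v$ is deterministic.

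Next I reduce an algorithm for the $v_t$ to one for the $u_t$. Given any randomized online signing algorithm $\mathcal A$ for sequences in $\R^{2d}$, define $\mathcal A'$ for sequences in $\R^2$ as follows. On receiving $u_t$, it forms $v_t$ as above, feeds $v_t$ to $\mathcal A$, and outputs the same sign $\varepsilon_t$. This is online, since $v_t$ depends only on $u_t$. For every prefix, every block of $\sum_{t\ls k}\varepsilon_tv_t$ equals $d^{-1/2}\sum_{t\ls k}\varepsilon_tu_t$, so
$$ \norm{\sum_{t=1}^k\varepsilon_tv_t}_\infty=d^{-1/2}\norm{\sum_{t=1}^k\varepsilon_tu_t}_\infty. $$
Theorem~\ref{thm:KRR-lower} applied to $\mathcal A'$ gives, with probability at least $1-2^{-T^{c'}}$, that the right-hand side is at least $c\sqrt{\ln T}\,d^{-1/2}$. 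This is exactly the claimed bound. If the quantifiers in Theorem~\ref{thm:KRR-lower} read "for every algorithm there is a distribution", the same reduction works, since the distribution is chosen against $\mathcal A'$.

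For the final consequence, suppose an algorithm guaranteed prefix discrepancy at most $M$ with probability, say, above $2^{-T^{c'}}$, for all admissible sequences with $d=d(T)=o(\ln T)$. Then $c\sqrt{\ln T/d}\to\infty$, which exceeds $M$ for large $T$. The two events together have probability greater than one, a contradiction.

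There is no real obstacle in this argument. The only points to check are that the embedding preserves online-ness and obliviousness, and that $\norm{u_t}_\infty\ls1$ holds; both are immediate.
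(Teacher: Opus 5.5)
Your proposal is correct and is essentially the paper's proof: your $v_t=d^{-1/2}(u_t,\ldots,u_t)$ is a coordinate permutation of the paper's $u_t\otimes w$ with $w=d^{-1/2}(1,\ldots,1)$. The reduction that feeds $v_t$ to the given algorithm and reuses its signs, together with the identity $\norm{\sum_{t\ls k}\varepsilon_tv_t}_\infty=d^{-1/2}\norm{\sum_{t\ls k}\varepsilon_tu_t}_\infty$, is exactly how the paper transfers Theorem~\ref{thm:KRR-lower}.
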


For $u=(u_1,u_2)\in\R^2$ and $w\in\R^d$, set $u\otimes w:=(u_1w,u_2w)\in\R^{2d}$.
We shall use the elementary identities $\norm{u\otimes w}_2=\norm u_2\norm w_2$ and $\norm{u\otimes w}_\infty=\norm u_\infty\norm w_\infty$.

\begin{proof}[Proof of Proposition~\ref{prop:flattened-lower}]
Apply Theorem~\ref{thm:KRR-lower}, and let $u_1,\ldots,u_T\in\R^2$ denote the resulting unit vectors.
Let $w:=d^{-1/2}(1,\ldots,1)\in\R^d$ and define $v_t:=u_t\otimes w\in\R^{2d}$.
Then $\norm{v_t}_2=\norm{u_t}_2\norm{w}_2=1$ and $\norm{v_t}_\infty=\norm{u_t}_\infty\norm{w}_\infty\ls d^{-1/2}$.
Any online algorithm for the sequence $(v_t)$ induces, by feeding it $u_t\otimes w$ and using the same signs, an online algorithm for $(u_t)$.
Moreover, for every prefix,
$$ \norm{\sum_{t=1}^k\varepsilon_t v_t}_\infty =\norm{\left(\sum_{t=1}^k\varepsilon_t u_t\right)\otimes w}_\infty =\frac1{\sqrt d} \norm{\sum_{t=1}^k\varepsilon_t u_t}_\infty. $$
The claimed lower bound follows.
\end{proof}

\begin{proposition}\label{prop:compact-barrier}
Let $\varphi:[y_0,1)\to[e,\infty)$ be a $C^2$ increasing convex function such that $\varphi(y)\to\infty$ as $y\uparrow1$.
Let $L:[e,\infty)\to(0,\infty)$ be nondecreasing.
If, for all $y$ sufficiently close to $1$,
\begin{equation}\label{eq:pointwise-curvature-control}
\varphi''(y)\ls C\varphi(y)L(\varphi(y)),
\end{equation}
then
\begin{equation}\label{eq:barrier-integral}
\int^\infty\frac{du}{u\sqrt{L(u)}}<\infty.
\end{equation}
In particular, if $L(u)=\ln^q u$, then necessarily $q>2$.
\end{proposition}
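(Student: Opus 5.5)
The plan is to reduce the statement to a first-order differential inequality for $p:=\varphi'$, viewed as a function of the level $u=\varphi(y)$. Since $\varphi$ is increasing and convex, $p\gr0$ is nondecreasing. For $y$ close to $1$ we may assume $p(y)>0$: otherwise $p\equiv0$ on an initial interval, and because $\varphi\to\infty$, $p$ must eventually become positive, so we restrict to that range.

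On this range $y\mapsto u=\varphi(y)$ is a $C^2$ increasing bijection onto some $[u_1,\infty)$. Set $P(u):=p(\varphi^{-1}(u))$. By the chain rule,
$$\frac{d}{du}\,\frac{P(u)^2}{2}=\frac{p\,\varphi''}{p}=\varphi''(y).$$
Hypothesis \eqref{eq:pointwise-curvature-control} therefore becomes
$$\frac{d}{du}P^2\ls 2C\,uL(u)$$
for $u$ large. Integrating from $u_1$ and using that $L$ is nondecreasing gives
$$P(u)^2\ls P(u_1)^2+2C\int_{u_1}^u sL(s)\,ds\ls P(u_1)^2+C u^2L(u).$$
Since $L>0$ is nondecreasing, $u^2L(u)\gr u^2L(e)\to\infty$, so for $u$ large the constant is absorbed. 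This yields $P(u)\ls C'u\sqrt{L(u)}$ for all $u\gr u_2$.

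Next we use the finiteness of the remaining distance to $1$. Write $y_2=\varphi^{-1}(u_2)$. Then
$$1-y_2=\int_{y_2}^1 dy=\int_{u_2}^\infty\frac{du}{P(u)}\gr\frac1{C'}\int_{u_2}^\infty\frac{du}{u\sqrt{L(u)}}.$$
The left-hand side is finite, so the integral converges; this is \eqref{eq:barrier-integral}.

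For the special case $L(u)=\ln^q u$, substitute $s=\ln u$. The integral becomes $\int^\infty s^{-q/2}\,ds$, which is finite iff $q/2>1$, i.e.\ $q>2$.

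The main obstacle is only technical: justifying the change of variables. This requires $p>0$ near $1$, so that $\varphi^{-1}$ is differentiable, and handling the endpoint constant in the integrated inequality. Both follow from monotonicity of $p$ and $\varphi\to\infty$, as indicated above.
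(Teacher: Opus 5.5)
Your proposal is correct and is essentially the paper's proof. Your identity $\frac{d}{du}\frac{P^2}{2}=\varphi''$ is the paper's step of multiplying \eqref{eq:pointwise-curvature-control} by $\varphi'$, integrating and substituting $u=\varphi(y)$; you then use the same monotonicity of $L$ to get $\varphi'\le C\varphi\sqrt{L(\varphi)}$ and conclude from the finiteness of $1-y_2=\int_{u_2}^\infty du/\varphi'(\varphi^{-1}(u))$.
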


\begin{proof}
After increasing $y_0$, we may assume $\varphi'(y)>0$.
Multiplying \eqref{eq:pointwise-curvature-control} by $\varphi'(y)$, integrating and then changing variables $u=\varphi(y)$ give
$$ \begin{aligned} \varphi'(y)^2 &\ls C_0+C_1\int_{\varphi(y_0)}^{\varphi(y)}uL(u)\,du\ls C_2\varphi(y)^2L(\varphi(y)),\\ \varphi'(y)&\ls C_3\varphi(y)\sqrt{L(\varphi(y))},\\ 1-y_0&=\int_{\varphi(y_0)}^\infty\frac{du}{\varphi'(\varphi^{-1}(u))}\gr c\int_{\varphi(y_0)}^\infty\frac{du}{u\sqrt{L(u)}}. \end{aligned} $$
The first line uses the monotonicity of $L$.
Thus \eqref{eq:barrier-integral} is necessary.
For $L(u)=\ln^q u$, the integral converges exactly when $q>2$.
\end{proof}

\begin{proposition}\label{prop:finite-scale-barrier}
Let $A\gr1$.
There is $c_A>0$ with the following property.
Let $\varphi:(-1,1)\to[1,\infty)$ be even, convex and $C^2$, assume that $\varphi(0)\ls2$, and let $d\gr4$.
If, for some $\Lambda\gr1$,
\begin{equation}\label{eq:finite-scale-curvature-control}
\varphi''(y)\ls A\Lambda\varphi(y) \qquad\hbox{whenever }\varphi(y)\ls d,
\end{equation}
and $\varphi$ attains the value $d$ on $(0,1)$, then $\Lambda\gr c_A\ln^2 d$.
\end{proposition}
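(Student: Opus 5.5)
We follow the energy argument of Proposition~\ref{prop:compact-barrier}, now run only up to the finite level $d$. Since $\varphi$ is even and convex, $\varphi'(0)=0$ and $\varphi'\gr0$ on $[0,1)$. Let $y_d\in(0,1)$ be the first point with $\varphi(y_d)=d$; by monotonicity $\varphi\ls d$ on $[0,y_d]$, so \eqref{eq:finite-scale-curvature-control} holds on this whole interval. Multiplying $\varphi''\ls A\Lambda\varphi$ by $\varphi'\gr0$ and integrating from $0$ gives
$$ \varphi'(y)^2\ls A\Lambda\bigl(\varphi(y)^2-\varphi(0)^2\bigr)\ls A\Lambda\varphi(y)^2, \qquad 0\ls y\ls y_d. $$
Hence $(\ln\varphi)'\ls\sqrt{A\Lambda}$ on $[0,y_d]$. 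Integrating, and using $\varphi(0)\ls2$, $y_d<1$ and $d\gr4$, we get
$$ \ln\frac d2\ls\ln\varphi(y_d)-\ln\varphi(0)\ls\sqrt{A\Lambda}\,y_d<\sqrt{A\Lambda}. $$
This yields $\Lambda\gr A^{-1}\ln^2(d/2)$.

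This bound is not quite $\ln^2 d$ when $d$ is small. However, for $d\gr4$ we have $\ln(d/2)\gr\frac12\ln d$, so $\Lambda\gr(4A)^{-1}\ln^2 d$, and we may take $c_A=1/(4A)$.

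The one point needing care is the ODE comparison. If $\varphi'$ vanishes on an initial segment, the logarithmic-derivative bound is still valid, because there it reads $0\ls\sqrt{A\Lambda}$. Dividing by $\varphi\gr1$ is harmless. We use no hypothesis beyond the level $d$, which matches the remark that $H_d$ may grow faster above $d$.
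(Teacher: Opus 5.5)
Your proposal is correct and follows the paper's proof step for step. You take the first hitting point $y_d$, multiply by $\varphi'\gr0$ and integrate to get $\varphi'^2\ls A\Lambda\varphi^2$, then integrate $(\ln\varphi)'\ls\sqrt{A\Lambda}$ over $[0,y_d]$. The only addition is that you make the constant explicit as $c_A=1/(4A)$, via $\ln(d/2)\gr\frac12\ln d$ for $d\gr4$.
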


\begin{proof}
Let $y_d\in(0,1)$ be the first point with $\varphi(y_d)=d$.
Since $\varphi$ is even and convex, $\varphi'(0)=0$ and $\varphi'(y)\gr0$ for $0\ls y\ls y_d$.
Multiplying \eqref{eq:finite-scale-curvature-control} by $\varphi'(y)$ and integrating twice gives
$$ \begin{aligned} \varphi'(y)^2&\ls A\Lambda\bigl(\varphi(y)^2-\varphi(0)^2\bigr)\ls A\Lambda\varphi(y)^2,\\ \frac{d}{dy}\ln\varphi(y)&\ls\sqrt{A\Lambda}\qquad(0\ls y\ls y_d),\\ \ln\frac{d}{\varphi(0)}&\ls\sqrt{A\Lambda}\,y_d\ls\sqrt{A\Lambda}. \end{aligned} $$
Since $\varphi(0)\ls2$ and $d\gr4$, this implies $\Lambda\gr c_A\ln^2 d$, as claimed.
\end{proof}

Proposition~\ref{prop:flattened-lower} shows that constant prefix discrepancy cannot hold uniformly when $d=o(\ln T)$.
Proposition~\ref{prop:compact-barrier} shows that a fixed compact potential satisfying the pointwise curvature estimate \eqref{eq:pointwise-curvature-control} must have logarithmic exponent strictly larger than $2$.
Proposition~\ref{prop:finite-scale-barrier} gives the corresponding lower bound $\Lambda\gr c_A\ln^2 d$ when the potential is allowed to depend on $d$.
The construction in Section~\ref{sec:critical-bump} has this order up to absolute constants.

%%%%%%%%%%%%%%%%%%%%%%%%%%%%%%%%%%%%%%%%%%%%%%%%%%%%%%%%%%%%%%%%%%%%%%%%%%%%%%%%%%%%%%%%%%%%%%%%%%%%%%%%%%%%%%%%%%%%%%%%%%%%%%%%%%%%%%
\section{A geometric extension}\label{app:geometric}
%%%%%%%%%%%%%%%%%%%%%%%%%%%%%%%%%%%%%%%%%%%%%%%%%%%%%%%%%%%%%%%%%%%%%%%%%%%%%%%%%%%%%%%%%%%%%%%%%%%%%%%%%%%%%%%%%%%%%%%%%%%%%%%%%%%%%%

For a symmetric convex body $K\subset\R^m$ we write $\norm{x}_K:=\inf\{t>0:x\in tK\}$.

The product construction of the preceding sections exploits the Euclidean condition on the arrivals and gives the stronger coordinate estimate of Theorem~\ref{thm:main-online}.
We now consider a different formulation in which both the arrivals and the discrepancy are measured in the norm of a symmetric target body $K$.
When this geometry is encoded by a norm satisfying the quadratic estimate below, a simpler compact invariant measure gives a dimension-dependent sufficient condition for constant prefix discrepancy.
The proof uses an elementary concentration estimate for convex potentials.

\begin{theorem}\label{thm:smooth-body}
There is an absolute constant $C>0$ with the following property.
Let $K\subset\R^m$ be a symmetric convex body and suppose that a norm $\norm{\cdot}_*$ satisfies $\norm{x}_K\ls\norm{x}_*\ls A\norm{x}_K$ for every $x\in\R^m$ and some $A\gr1$, and
\begin{equation}\label{eq:quadratic-smoothness-intro}
\frac{\norm{x+y}_*^2+\norm{x-y}_*^2}{2} \ls \norm{x}_*^2+\sigma\norm{y}_*^2 \qquad(x,y\in\R^m)
\end{equation}
for some $\sigma\gr1$.
Let $v_1,\ldots,v_T\in\R^m$ be fixed in advance and set $d_{K,t}=\norm{v_t}_K^{-2}$, with $d_{K,t}=\infty$ when $v_t=0$.
For every $\alpha\gr1$ there is a randomized online signing such that
\begin{equation}\label{eq:smooth-body-heterogeneous}
\Pp\left\{ \max_{k\ls T}\norm{\sum_{t=1}^k\varepsilon_t v_t}_K>6 \right\} \ls 3\sum_{t=1}^T \min\left\{1,\,2^m\left(\frac{C A^2\sigma\alpha}{d_{K,t}}\right)^{\alpha/4}\right\}.
\end{equation}
In particular, if $d_{K,t}\gr d$ for all $t$ and $d\gr C A^2\sigma$, then
\begin{equation}\label{eq:smooth-body-uniform-tail}
\Pp\left\{ \max_{k\ls T}\norm{\sum_{t=1}^k\varepsilon_t v_t}_K>6 \right\} \ls C T\exp\left(Cm-\frac{c d}{A^2\sigma}\right).
\end{equation}
Consequently, for every $0<\varepsilon<1$, the sufficient condition
\begin{equation}\label{eq:smooth-body-threshold}
d\gr C A^2\sigma\left(m+\ln\frac{3T}{\varepsilon}\right)
\end{equation}
implies that the probability in \eqref{eq:smooth-body-uniform-tail} is at most $\varepsilon$.
\end{theorem}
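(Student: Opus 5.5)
The plan is to run Proposition~\ref{prop:nonuniform-triplet} with a compactly supported, radially defined invariant law on the unit ball of $\norm{\cdot}_*$. Put $B_*:=\{x:\norm{x}_*\ls1\}$, $N(x):=\norm{x}_*^2$, and let $\mu_\alpha$ have density
$$ f_\alpha(x)\propto\bigl(1-N(x)\bigr)^{\alpha}\1_{\operatorname{int}B_*}(x), \qquad U(x)=-\alpha\ln\bigl(1-N(x)\bigr). $$
Since $\norm{x}_K\ls\norm{x}_*$, we have $B_*\subset K$. Proposition~\ref{prop:nonuniform-triplet} applied with the body $B_*$ then gives $\norm{\sum_{t\ls k}\varepsilon_tv_t}_K\ls\norm{\sum_{t\ls k}\varepsilon_tv_t}_*\ls6$ off the failure event. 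It therefore suffices to bound $\theta_t=\mu_\alpha\{x\hbox{ not }v_t\hbox{-balanced}\}$ by $\min\{1,2^m(CA^2\sigma\alpha/d_{K,t})^{\alpha/4}\}$.

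\textbf{Step 1 (deterministic balance away from the boundary).} Fix $v$ and write $e:=\norm v_*^2\ls A^2\norm v_K^2=A^2/d_{K,t}$. Put $s:=1-N(x)$ and $s_\pm:=1-N(x\pm v)=s-\delta_\pm$. The triangle inequality gives $|\delta_\pm|\ls2\sqrt e+e\ls3\sqrt e$, so $x\pm v\in\operatorname{int}B_*$ once $s>3\sqrt e$. The smoothness hypothesis \eqref{eq:quadratic-smoothness-intro} gives $\delta_++\delta_-\gr-2\sigma e$. For $s\gr M\sqrt e$, with $M$ a large absolute constant, we expand $\ln(1-\delta_\pm/s)\gr-\delta_\pm/s-C\delta_\pm^2/s^2$ and obtain
$$ D_v(x)=-\alpha\Bigl[\ln\tfrac{s_+}{s}+\ln\tfrac{s_-}{s}\Bigr]\ls\alpha\Bigl(\frac{2\sigma e}{s}+\frac{Ce}{s^2}\Bigr). $$
This is at most $2\ln2$ as soon as $s\gr t_0:=C'\max\{\alpha\sigma e,\sqrt{\alpha e}\}$. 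Lemma~\ref{lem:curvature-criterion} then shows that $x$ is $v$-balanced.

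\textbf{Step 2 (boundary mass).} It remains to bound $\mu_\alpha\{1-N<t\}$. On this set $f_\alpha\ls t^\alpha/Z$. On $\frac12B_*$ the unnormalized density is at least $(3/4)^\alpha$, so $Z\gr(3/4)^\alpha2^{-m}\operatorname{vol}(B_*)$. Hence
$$ \mu_\alpha\{1-N<t\}\ls2^m(4t/3)^\alpha. $$
When $\alpha\sigma e\ls1$ we have $t_0\ls C\sqrt{\alpha\sigma e}$, using $\sigma\gr1$. Thus $\theta_t\ls2^m(C\alpha\sigma A^2/d_{K,t})^{\alpha/2}$, which is stronger than the claimed $\alpha/4$ power whenever the base is below $1$. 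Otherwise the claimed bound exceeds $1$ after enlarging $C$, and the trivial bound applies. Proposition~\ref{prop:nonuniform-triplet} then yields \eqref{eq:smooth-body-heterogeneous}.

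\textbf{Step 3 (uniform consequences).} If $d_{K,t}\gr d\gr CA^2\sigma$, choose $\alpha:=d/(e\,CA^2\sigma)\gr1$. Each summand is then at most $2^me^{-\alpha/4}$, which gives \eqref{eq:smooth-body-uniform-tail}. The threshold \eqref{eq:smooth-body-threshold} follows by solving $Cm-cd/(A^2\sigma)\ls\ln(\varepsilon/(3T))$.

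The main obstacle is Step 1: getting a one-sided lower bound on $\ln s_++\ln s_-$ from \eqref{eq:quadratic-smoothness-intro}, which only controls $\delta_++\delta_-$. The individual $\delta_\pm$ are only of order $\sqrt e$, which forces the second-order term $e/s^2$ and hence the boundary-layer width $\sqrt{\alpha e}$. I would first verify the Taylor bound carefully in the regime $|\delta_\pm|/s\ls1/2$, and then check that the loss from $\sqrt{\alpha e}$ still lands within the stated $\alpha/4$ exponent.
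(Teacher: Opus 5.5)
Your proof is correct and is essentially the paper's own argument: the same density $(1-\norm{x}_*^2)^\alpha$ on $B_*$, the same smoothness-plus-second-order-Taylor bound on $D_v(x)$, the same application of Proposition~\ref{prop:nonuniform-triplet} to $B_*\subset K$, and the same choice $\alpha\asymp d/(A^2\sigma)$. One sign slip to fix: \eqref{eq:quadratic-smoothness-intro} gives $\delta_++\delta_-\ls2\sigma e$, not $\delta_++\delta_-\gr-2\sigma e$, although your displayed bound on $D_v(x)$ already uses the correct direction.

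The only real difference is the boundary-layer mass. The paper bounds it through Lemma~\ref{lem:convex-potential-tail} and gets $2^mr^{\alpha/2}$. Your direct comparison with $\frac12B_*$ gives the sharper $2^m(4r/3)^\alpha$, which is why you end up with exponent $\alpha/2$ instead of the stated $\alpha/4$.
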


\begin{lemma}\label{lem:convex-potential-tail}
Let $W:\R^m\to[0,\infty]$ be convex, let $W(0)=0$, and assume that $0<Z:=\int_{\R^m}e^{-W(x)}\,dx<\infty$.
If $X$ has density $Z^{-1}e^{-W}$, then, for every $s\gr0$,
\begin{equation}\label{eq:convex-potential-tail}
\Pp\{W(X)>2m\ln2+2s\}\ls e^{-s}.
\end{equation}
\end{lemma}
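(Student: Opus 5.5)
Let $t:=2m\ln2+2s$. The plan is to bound the probability of the superlevel set by comparing the mass of $e^{-W}$ there with the total mass $Z$, using convexity to shrink superlevel sets into sublevel sets.

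\textbf{Step 1 (sublevel sets carry mass).} Put $K_u:=\{W\ls u\}$, which is convex and contains $0$. Compare the two integrals $\int_{\{W>t\}}e^{-W}$ and $\int_{\{W\ls t/2\}}e^{-W}$ via the map $x\mapsto x/2$. By convexity and $W(0)=0$,
$$ W(x/2)\ls \tfrac12 W(x). $$

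\textbf{Step 2 (change of variables).} Write
$$ \int_{\R^m}e^{-W(x)/2}\,dx=2^m\int_{\R^m}e^{-W(2y)/2}\,dy\ls 2^m\int_{\R^m}e^{-W(y)}\,dy=2^mZ, $$
where the inequality uses $W(2y)\gr 2W(y)$, the same convexity fact with $x=2y$. Hence
$$ \E\,e^{W(X)/2}=Z^{-1}\int e^{W/2}e^{-W}=Z^{-1}\int e^{-W/2}\ls 2^m. $$

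\textbf{Step 3 (Markov).} By Markov's inequality,
$$ \Pp\{W(X)>t\}\ls 2^m e^{-t/2}=2^me^{-m\ln2-s}=e^{-s}, $$
which is \eqref{eq:convex-potential-tail}.

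The only care needed is the value $\infty$: the inequality $W(2y)\gr2W(y)$ still holds when $W(y)=\infty$, by the convex-function convention that then $W(2y)=\infty$. The points where $W=\infty$ contribute zero to every integral. So there is no real obstacle here; the key step is noticing that the exponential moment at parameter $1/2$ reduces to a dilation by $2$.
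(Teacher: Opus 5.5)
Your proof is correct and is essentially the paper's argument. The paper proves $Z_t=\int e^{-tW}\ls t^{-m}Z$ for every $0<t<1$, using the dilation $y=tx$ and the bound $W(y)\ls tW(y/t)$, and then takes $\theta=1/2$ and applies Markov; your Steps 2--3 do the same thing with $t=1/2$ fixed from the start (your Step 1 comparison of super- and sublevel sets is never used and can be dropped).
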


\begin{proof}
For $0<t<1$ put $Z_t:=\int_{\R^m}e^{-tW(x)}\,dx$.
After the change of variables $y=tx$, convexity, $W(0)=0$ and Markov's inequality give
$$\begin{aligned} Z_t&=t^{-m}\int_{\R^m}e^{-tW(y/t)}\,dy\ls t^{-m}\int_{\R^m}e^{-W(y)}\,dy=t^{-m}Z,\\ \E e^{\theta W(X)}&=\frac{Z_{1-\theta}}{Z}\ls(1-\theta)^{-m}\qquad(0<\theta<1),\\ \Pp\{W(X)>u\}&\ls2^m e^{-u/2}. \end{aligned} $$
Here the first line uses $W(y)\ls tW(y/t)$, and the last follows by taking $\theta=1/2$; it gives \eqref{eq:convex-potential-tail}.
\end{proof}

\begin{proof}[Proof of Theorem~\ref{thm:smooth-body}]
Fix $\alpha\gr1$ and let $B_*:=\{x\in\R^m:\norm{x}_*<1\}$.
On $B_*$ consider the probability density
\begin{equation}\label{eq:smooth-density}
f_\alpha(x)=Z_\alpha^{-1}(1-\norm{x}_*^2)^\alpha\1_{B_*}(x).
\end{equation}
Its potential is $U(x)=-\alpha\ln(1-\norm{x}_*^2)$ for $x\in B_*$.
The function $U$, extended by $+\infty$ outside $B_*$, is convex and satisfies $U(0)=0$.

Fix one arrival $v$, put $\delta_*:=\norm{v}_*$ and $s(x):=1-\norm{x}_*^2$, and assume first that
\begin{equation}\label{eq:smooth-rho-small}
\rho:=C_0\sqrt{\alpha\sigma}\,\delta_*\ls\frac14,
\end{equation}
where $C_0$ is a sufficiently large absolute constant.
We claim that every $x\in B_*$ satisfying
\begin{equation}\label{eq:smooth-local-gap}
s(x)\gr\rho
\end{equation}
is $v$-balanced.

Put $F(z)=\norm{z}_*^2$ and $\phi(u)=-\alpha\ln(1-u)$.
From \eqref{eq:smooth-local-gap} and \eqref{eq:smooth-rho-small}, after increasing $C_0$ we have $\delta_*\ls s(x)/8$.
Hence $\norm{x+tv}_*^2\ls1-s(x)/2$ for every $|t|\ls1$, so the whole segment $x+[-v,v]$ lies in $B_*$.
Write $\Delta_\pm:=F(x\pm v)-F(x)$.
The smoothness assumption \eqref{eq:quadratic-smoothness-intro} and the triangle inequality give
$$ \Delta_++\Delta_-\ls2\sigma\delta_*^2, \qquad |\Delta_\pm|\ls\delta_*\bigl(\norm{x\pm v}_*+\norm{x}_*\bigr)\ls3\delta_*. $$
On the interval between $F(x)$ and $F(x\pm v)$ we have $\phi'(u)\ls2\alpha/s(x)$ and $\phi''(u)\ls4\alpha/s(x)^2$.
Taylor's formula therefore gives
$$ D_v(x) =U(x+v)+U(x-v)-2U(x) \ls C\alpha\sigma\frac{\delta_*^2}{s(x)^2}. $$
Choosing $C_0$ sufficiently large, \eqref{eq:smooth-local-gap} implies that the last expression is at most $2\ln2$.
Lemma~\ref{lem:curvature-criterion} proves the claim.

Let $X$ have density \eqref{eq:smooth-density}.
By Lemma~\ref{lem:convex-potential-tail}, $\Pp\{s(X)<r\}=\Pp\{U(X)>\alpha\ln(1/r)\}\ls2^m r^{\alpha/2}$ for every $0<r<1$.
If \eqref{eq:smooth-rho-small} holds, the claim and the preceding estimate yield $\Pp\{X\hbox{ is not }v\hbox{-balanced}\}\ls2^m\rho^{\alpha/2}$.
If \eqref{eq:smooth-rho-small} fails, we use the trivial bound $1$.
Since $\delta_*\ls A\norm{v}_K$, we conclude that
$$ \Pp\{X\hbox{ is not }v\hbox{-balanced}\}\ls\min\left\{1,\,2^m\left(C A^2\sigma\alpha\norm{v}_K^2\right)^{\alpha/4}\right\}. $$
Applying Proposition~\ref{prop:nonuniform-triplet} with $K$ replaced by the unit ball $\overline B_*$ of $\norm{\cdot}_*$ and using $\norm{x}_K\ls\norm{x}_*$ proves \eqref{eq:smooth-body-heterogeneous}.

Assume now that $d_{K,t}\gr d$ for every $t$ and $d\gr C A^2\sigma$.
Choose $\alpha:=c_0\frac{d}{A^2\sigma}$, where $c_0>0$ is a sufficiently small absolute constant.
Then $\alpha\gr1$, after adjusting the lower bound on $d$, and the quantity inside the power in \eqref{eq:smooth-body-heterogeneous} is bounded by an absolute constant strictly smaller than $1$.
Hence
$$ 2^m\left(\frac{C A^2\sigma\alpha}{d}\right)^{\alpha/4} \ls \exp\left(Cm-\frac{c d}{A^2\sigma}\right). $$
This proves \eqref{eq:smooth-body-uniform-tail}.
The last assertion follows by increasing the absolute constant in \eqref{eq:smooth-body-threshold}.
\end{proof}

For a polytope given as an intersection of slabs we obtain the following consequence.

\begin{corollary}\label{cor:slab-body}
There is an absolute constant $C>0$ such that the following holds.
Let $K=\bigcap_{i=1}^N\{x\in\R^m:|\langle a_i,x\rangle|\ls1\}$ be bounded and symmetric, let $0<\varepsilon<1$, and let $v_1,\ldots,v_T$ be fixed in advance.
If $\norm{v_t}_K\ls d^{-1/2}$ for every $1\ls t\ls T$ and
\begin{equation}\label{eq:slab-threshold}
d\gr C\left(m+\ln\frac{3T}{\varepsilon}\right)\ln(eN),
\end{equation}
then there is a randomized online signing such that
$$ \Pp\left\{ \max_{k\ls T}\norm{\sum_{t=1}^k\varepsilon_t v_t}_K>6 \right\} \ls\varepsilon. $$
\end{corollary}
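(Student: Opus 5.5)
We will deduce this corollary from Theorem~\ref{thm:smooth-body} by constructing a smooth norm equivalent to $\norm{\cdot}_K$. Here $\norm{x}_K=\max_i|\langle a_i,x\rangle|$, and $K$ is bounded, so the $a_i$ span $\R^m$. Take $p:=\max\{2,\ln(eN)\}$ and put
$$ \norm{x}_*:=\Bigl(\sum_{i=1}^N|\langle a_i,x\rangle|^p\Bigr)^{1/p}. $$
This is a norm because the $a_i$ span $\R^m$. We have $\norm{x}_K\ls\norm{x}_*\ls N^{1/p}\norm{x}_K\ls e\norm{x}_K$, so the equivalence hypothesis holds with the absolute constant $A=e$.

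The main step is the quadratic smoothness estimate \eqref{eq:quadratic-smoothness-intro} with $\sigma\ls Cp$. Let $M:\R^m\to\ell_p^N$ be the linear map $x\mapsto(\langle a_i,x\rangle)_i$. Then $\norm{x}_*=\norm{Mx}_{\ell_p^N}$, and it suffices to prove, for $u,w\in\ell_p^N$,
$$ \tfrac12\bigl(\norm{u+w}_p^2+\norm{u-w}_p^2\bigr)\ls\norm u_p^2+(p-1)\norm w_p^2. $$
This is the classical $2$-uniform smoothness of $L_p$ for $p\gr2$, with constant $p-1$ (Ball--Carlen--Lieb, or Figiel). If a self-contained argument is wanted, I would first try the following. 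Twice differentiate $t\mapsto\norm{u+tw}_p^2$. The second derivative is at most $2(p-1)\norm{w}_p^2$, by Hölder's inequality applied to $\sum|u_i+tw_i|^{p-2}w_i^2$ together with the nonpositive correction term coming from the outer square. Integrating $\tfrac12\int_{-1}^{1}(1-|t|)\,g''(t)\,dt$ then gives the inequality. Differentiability at points where $u+tw$ has zero coordinates is harmless for $p\gr2$. This is the one nontrivial ingredient; everything else is bookkeeping.

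With $A=e$ and $\sigma=p-1\ls\ln(eN)$, the assumption $\norm{v_t}_K\ls d^{-1/2}$ gives $d_{K,t}\gr d$ for every $t$. The threshold \eqref{eq:smooth-body-threshold} then reads $d\gr Ce^2\ln(eN)\bigl(m+\ln\frac{3T}{\varepsilon}\bigr)$, which is \eqref{eq:slab-threshold} after enlarging $C$. This condition also implies $d\gr CA^2\sigma$, as the uniform case of the theorem requires. Theorem~\ref{thm:smooth-body} then yields the online signing with probability at most $\varepsilon$ that the $K$-prefix discrepancy exceeds $6$.
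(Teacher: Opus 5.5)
Your proposal is correct and is essentially the paper's proof: the same norm $\norm{x}_*=(\sum_i|\langle a_i,x\rangle|^p)^{1/p}$ with $p$ of order $\ln(eN)$, the bounds $A\ls e$ and $\sigma=p-1\ls\ln(eN)$ from the Ball--Carlen--Lieb inequality, and a direct application of Theorem~\ref{thm:smooth-body}. Your choice $p=\max\{2,\ln(eN)\}$ instead of $\max\{2,\ln N\}$ changes nothing, and your optional self-contained derivation of the smoothness constant $p-1$ is also valid: the outer-square term in $g''$ is nonpositive, and H\"older with exponents $p/(p-2)$ and $p/2$ bounds the remaining term by $2(p-1)\norm{w}_p^2$.
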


\begin{proof}
Since $K$ is bounded, the functionals $a_1,\ldots,a_N$ span $\R^m$.
Put $p:=\max\{2,\ln N\}$ and define
\begin{equation}\label{eq:slab-p-norm}
\norm{x}_*:=\left(\sum_{i=1}^N|\langle a_i,x\rangle|^p\right)^{1/p}.
\end{equation}
We shall use the following classical inequality for $\ell_p$: for every $p\gr2$ and $x,y\in\R^N$,
\begin{equation}\label{eq:lp-quadratic-smoothness}
\frac{\norm{x+y}_p^2+\norm{x-y}_p^2}{2} \ls \norm{x}_p^2+(p-1)\norm{y}_p^2.
\end{equation}
This is the commutative case of the Ball--Carlen--Lieb inequality~\cite{BallCarlenLieb}.
Then
$$ \norm{x}_K =\max_{i\ls N}|\langle a_i,x\rangle| \ls\norm{x}_* \ls N^{1/p}\norm{x}_K \ls e\norm{x}_K. $$
By \eqref{eq:lp-quadratic-smoothness}, Theorem~\ref{thm:smooth-body} applies with $A\ls e$ and $\sigma=p-1\ls C\ln(eN)$, which gives \eqref{eq:slab-threshold}.
\end{proof}

The dependence on the displayed number of slabs can be replaced by the size of any fixed-factor slab approximation.
Indeed, suppose that $K$ is a symmetric convex body and $K\subseteq P\subseteq\lambda K$, where $P=\bigcap_{i=1}^N\{x:|\langle a_i,x\rangle|\ls1\}$ and $\lambda\gr1$.
If $\norm{v_t}_K\ls d^{-1/2}$, then $\norm{v_t}_P\ls d^{-1/2}$.
Corollary~\ref{cor:slab-body} therefore gives, under \eqref{eq:slab-threshold},
$$ \Pp\left\{ \max_{k\ls T}\norm{\sum_{t=1}^k\varepsilon_t v_t}_K>6\lambda \right\} \ls\varepsilon. $$
Thus the relevant combinatorial parameter may be taken from a convenient constant-factor polyhedral approximation rather than from a particular redundant presentation of $K$.

The smooth-norm formulation is often more economical.
For $K=B_2^m$ one may take $\norm{\cdot}_*=\norm{\cdot}_2$, so $A=\sigma=1$ and Theorem~\ref{thm:smooth-body} gives the scale $d\gr C(m+\ln(T/\varepsilon))$.
For $K=B_\infty^m$, taking $p=\max\{2,\ln m\}$ in \eqref{eq:slab-p-norm} gives $A\ls e$ and $\sigma\ls C\ln(em)$.
More generally, the same argument applies whenever $K$ is uniformly equivalent to the unit ball of a norm satisfying \eqref{eq:quadratic-smoothness-intro}, with the corresponding value of $\sigma$.

%%%%%%%%%%% End of paper body %%%%%%%%%%%%%%%%%%%%%%%%%%%%%%%
\bigskip

\noindent {\bf Declaration of AI-assisted technologies in the manuscript preparation process.} 
During the preparation of this work the author used ChatGPT (OpenAI) to assist with the organization, drafting and editing of parts of the manuscript. The mathematical arguments, statements, references and final text were reviewed and edited by the author who takes full responsibility for the content of the paper.

\medskip

\noindent {\bf Acknowledgement.} 
The author acknowledges support by a PhD scholarship from the National Technical University of Athens.
The author thanks Apostolos Giannopoulos for useful discussions.

\bigskip

\noindent {\bf Keywords:} discrepancy theory, online vector balancing, Koml\'os problem, convex bodies, Metropolis walk.

\smallskip

\noindent {\bf 2020 Mathematics Subject Classification:} Primary 11K38; Secondary 46B20, 60J05, 68W27.

\medskip 

\noindent \textsc{Antonios \ Hmadi}: School of Applied Mathematical and Physical Sciences, National Technical University of Athens, Department of Mathematics, Zografou Campus, GR-157 80, Athens, Greece.

\smallskip

\noindent \textit{E-mail:} \texttt{ahmadi@mail.ntua.gr}


\footnotesize
\begin{thebibliography}{100}
\footnotesize

\bibitem{AdenAli}
\textrm{I.~Aden-Ali},
\textit{Optimal online discrepancy minimization in linear time},
arXiv:2607.04388, 2026.

\bibitem{ATThreshold}
\textrm{D.~J. Altschuler and K.~Tikhomirov},
\textit{A threshold for online balancing of sparse i.i.d. vectors},
arXiv:2509.02432, 2025.

\bibitem{AT}
\textrm{D.~J. Altschuler and K.~Tikhomirov},
\textit{Online Beck--Fiala down to logarithmic sparsity},
arXiv:2607.14238, 2026.

\bibitem{ALS}
\textrm{R.~Alweiss, Y.~P. Liu, and M.~Sawhney},
\textit{Discrepancy minimization via a self-balancing walk},
Proceedings of the 53rd Annual ACM Symposium on Theory of Computing,
14--20, 2021.

\bibitem{BallCarlenLieb}
\textrm{K.~Ball, E.~A. Carlen, and E.~H. Lieb},
\textit{Sharp uniform convexity and smoothness inequalities for trace norms},
Invent. Math. \textbf{115} (1994), 463--482.

\bibitem{Banaszczyk}
\textrm{W.~Banaszczyk},
\textit{Balancing vectors and Gaussian measures of $n$-dimensional convex bodies},
Random Structures \& Algorithms \textbf{12} (1998), no.~4, 351--360.

\bibitem{BansalSurvey}
\textrm{N.~Bansal},
\textit{Discrepancy theory and related algorithms},
in \textit{International Congress of Mathematicians 2022, Vol.~7},
EMS Press, Berlin, 2023, 5178--5210.

\bibitem{BansalDadushGarg}
\textrm{N.~Bansal, D.~Dadush, and S.~Garg},
\textit{An algorithm for Koml\'os conjecture matching Banaszczyk's bound},
SIAM Journal on Computing \textbf{48} (2019), no.~2, 534--553.

\bibitem{GramSchmidtWalk}
\textrm{N.~Bansal, D.~Dadush, S.~Garg, and S.~Lovett},
\textit{The Gram--Schmidt walk: a cure for the Banaszczyk blues},
Theory of Computing \textbf{15} (2019), no.~21, 1--27.

\bibitem{BansalJiang}
\textrm{N.~Bansal and H.~Jiang},
\textit{Decoupling via affine spectral-independence: Beck--Fiala and Koml\'os bounds beyond Banaszczyk},
Proceedings of the 58th Annual ACM Symposium on Theory of Computing,
432--442, 2026; arXiv:2508.03961.

\bibitem{BJMSS}
\textrm{N.~Bansal, H.~Jiang, R.~Meka, S.~Singla, and M.~Sinha},
\textit{Online discrepancy minimization for stochastic arrivals},
in \textit{Proceedings of the 2021 ACM--SIAM Symposium on Discrete Algorithms},
2842--2861, 2021.

\bibitem{BJSS}
\textrm{N.~Bansal, H.~Jiang, S.~Singla, and M.~Sinha},
\textit{Online vector balancing and geometric discrepancy},
Proceedings of the 52nd Annual ACM Symposium on Theory of Computing,
1139--1152, 2020.

\bibitem{BansalSpencer}
\textrm{N.~Bansal and J.~H. Spencer},
\textit{On-line balancing of random inputs},
Random Structures \& Algorithms \textbf{57} (2020), no.~4, 879--891.

\bibitem{BeckFiala}
\textrm{J.~Beck and T.~Fiala},
\textit{``Integer-making'' theorems},
Discrete Applied Mathematics \textbf{3} (1981), no.~1, 1--8.

\bibitem{KRR}
\textrm{J.~Kulkarni, V.~Reis, and T.~Rothvoss},
\textit{Optimal online discrepancy minimization},
Proceedings of the 56th Annual ACM Symposium on Theory of Computing,
1832--1840, 2024; arXiv:2308.01406.

\bibitem{LSS}
\textrm{Y.~P. Liu, A.~Sah, and M.~Sawhney},
\textit{A Gaussian fixed point random walk},
in \textit{13th Innovations in Theoretical Computer Science Conference},
LIPIcs \textbf{215} (2022), 101:1--101:10.

\bibitem{LovettMeka}
\textrm{S.~Lovett and R.~Meka},
\textit{Constructive discrepancy minimization by walking on the edges},
SIAM Journal on Computing \textbf{44} (2015), no.~5, 1573--1582.

\bibitem{SmirnovVershynin}
\textrm{G.~Smirnov and R.~Vershynin},
\textit{Discrepancy and Fisher information},
arXiv:2605.13107, 2026.

\end{thebibliography}
\end{document}